\newtheorem{Thm}{\textbf{Theorem}}[section]
\newtheorem{Lem}[Thm]{Lemma}
\newtheorem{Fac}[Thm]{Fact}
\newtheorem{Cor}[Thm]{Corollary}
\newtheorem{Prop}[Thm]{Proposition}
\newtheorem{Conj}[Thm]{Conjecture}
\theoremstyle{remark}
\newtheorem{Rem}[Thm]{Remark}
\theoremstyle{definition}
\newtheorem{Def}[Thm]{Definition}
\newtheorem*{ack}{Acknowledgments}
\newtheorem*{fund}{Funding}
\newcommand{\Aut}{\mathop{\mathrm{Aut}}\nolimits}
\newcommand{\GL}{\mathop{\mathrm{GL}}\nolimits}
\newcommand{\Proj}{\mathop{\mathrm{Proj}}\nolimits}
\newcommand{\Pic}{\mathop{\mathrm{Pic}}\nolimits}
\newcommand{\Supp}{\mathop{\mathrm{Supp}}\nolimits}
\newcommand{\mult}{\mathop{\mathrm{mult}}\nolimits}
\newcommand{\DF}{\mathop{\mathrm{DF}}\nolimits}
\begin{document}

\title{The Calabi conjecture and K-stability} 

\author{Yuji Odaka}
\address{Research Institute for Mathematical Sciences (RIMS), 
Kyoto University, Oiwake-cho, Kitashirakawa, Sakyo-ku, Kyoto
606-8502, Japan}
\email{yodaka@kurims.kyoto-u.ac.jp}

\begin{abstract}
We algebraically prove K-stability of polarized Calabi-Yau varieties and canonically polarized  varieties  with mild singularities. \textcolor{black}{In particular, the} ``stable varieties" introduced by Koll\'{a}r-Shepherd-Barron \cite{KSB88} and Alexeev \cite{Ale94}, which form compact moduli space, are proven to be K-stable although 
it is well known that they are \textit{not} necessarily asymptotically (semi)stable. As a consequence, we have \textit{orbifold} counterexamples, to 
the folklore conjecture ``K-stability implies asymptotic stability". 
They have K\"{a}hler-Einstein (orbifold) metrics so the result of 
Donaldson \cite{Don01} does \textit{not} hold for \textit{orbifolds}. 

\end{abstract}


\maketitle


\section{Introduction}
Throughout, we work over $\mathbb{C}$, the field of complex numbers. 
The original GIT stability notion for polarized variety is \textit{asymptotic \textcolor{black}{$($}Chow or Hilbert\textcolor{black}{$)$} stability} which was studied by Mumford, Gieseker etc (cf.\ \cite{Mum77}, \cite{Gie77}, \cite{Gie82} ). The newer version \textit{K-stability} of polarized variety is defined as positivity of \textcolor{black}{the} \textit{Donaldson-Futaki invariants}\footnote{It is also called the generalized Futaki invariants or 
simply called the Futaki invariants by S.~K.~Donaldson. } \cite{Don02}, 
a kind of GIT weights, which \textcolor{black}{is}
 a reformulation of Tian's original notion \cite{Tia97}. 
It is introduced \textcolor{black}{with an expectation to be the} 
\textcolor{black}{algebro-geometric} 
counterpart of the existence of K\"{a}hler-Einstein metrics or more generally K\"{a}hler metrics with constant scalar curvature (cscK). 

Let us recall that \textcolor{black}{the 
Donaldson-Futaki invariant is a rational number associated to a 
\textit{test configuration} (which correspond to $1$-parameter subgroup)} and it is just 
a ``leading coefficient" of the \textcolor{black}{sequence} 
of Chow weights with respect to twists \textcolor{black}{of the polarization} of the test configuration, while asymptotic Chow 
stability is, \textcolor{black}{roughly speaking, defined by} 
 ``all asymptotic behaviour" of \textcolor{black}{Chow weights rather than just by 
their leading coefficients. }
For the \textcolor{black}{details on these notions}, we refer to 
\cite[section2]{RT07}, \cite{Mab08a} and the review \cite[section 2]{Od09b}. 

In the previous paper \cite{Od09b}, we reformed a\textcolor{black}{n algebro-geometric}  formula of \textcolor{black}{the} 
Donaldson-Futaki invariants by X.~Wang \cite[Proposition 19]{Wan08}, 
 and gave its applications; we established K-(semi)stabilities \textcolor{black}{of} some classes of polarized varieties. This paper is a sequel to that paper. 

By $(X,L)$, we \begin{Eliminates}\textcolor{green}{usually}\end{Eliminates} denote 
\textcolor{black}{an} equidimensional polarized projective variety (i.e.\ reduced)\textcolor{black}{,} which is not necessarily smooth\textcolor{black}{,} 
with $\dim(X)=n$. 
\textcolor{black}{Moreover, w}e \textcolor{black}{always} assume that $X$ is $\mathbb{Q}$-Gorenstein, is Gorenstein in codimension $1$ and satisfies Serre condition $S_{2}$. 
These technical conditions are put to ensure that the canonical divisor $K_{X}$ or sheaf $\omega_{X}$ is in a tractable class (cf.\ e.g., \cite{Ale96}). 

\textcolor{black}{The following is the main result of this paper. }

\begin{Thm}[=Theorem \ref{moreKst.1} and \ref{moreKst.2}]\label{i.moreKst}

{\rm (i)}
A semi-log-canonical $($pluri$)$canonically polarized variety $(X, \mathcal{O}_{X}(mK_{X}))$, where $m \in \mathbb{Z}_{>0}$, is K-stable. 

{\rm (ii)}
A log-terminal polarized variety $(X,L)$ with numerically trivial canonical divisor 
$K_{X}$ is K-stable. 

\end{Thm}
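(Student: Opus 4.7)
The plan is to apply the intersection-theoretic reformulation of the Donaldson-Futaki invariant from the prequel \cite{Od09b}, which recasts Wang's formula \cite{Wan08}. Given a test configuration $(\mathcal{X},\mathcal{L})$ of $(X,L)$, take its natural compactification $(\bar{\mathcal{X}},\bar{\mathcal{L}})$ over $\mathbb{P}^{1}$. Up to a positive multiplicative constant, $\mathrm{DF}(\mathcal{X},\mathcal{L})$ equals
$$(n+1)\bigl(K_{\bar{\mathcal{X}}/\mathbb{P}^{1}}\cdot\bar{\mathcal{L}}^{n}\bigr) - \frac{n(K_{X}\cdot L^{n-1})}{(L^{n})}\bigl(\bar{\mathcal{L}}^{n+1}\bigr).$$
In both parts of the theorem $K_{X}\equiv\lambda L$ numerically, with $\lambda=1/m$ in case (i) and $\lambda=0$ in case (ii). Substituting, the right-hand side collapses to $(n+1)(K_{\bar{\mathcal{X}}/\mathbb{P}^{1}}\cdot\bar{\mathcal{L}}^{n})-n\lambda(\bar{\mathcal{L}}^{n+1})$, and it suffices to prove this is strictly positive whenever the test configuration is nontrivial.

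To bound this intersection number I take a log resolution $\mu:\tilde{\mathcal{X}}\to\bar{\mathcal{X}}$ that is an isomorphism over $\mathbb{P}^{1}\setminus\{0\}$, with reduced central fiber $\sum_{i}E_{i}$ of simple normal crossings. Writing $K_{\tilde{\mathcal{X}}/\mathbb{P}^{1}}=\mu^{*}K_{\bar{\mathcal{X}}/\mathbb{P}^{1}}+\sum_{i}a_{i}E_{i}$ with $a_{i}$ the discrepancies of $\bar{\mathcal{X}}$ along $E_{i}$, and using the proportionality $K_{X}\equiv\lambda L$ to express the class $\bar{\mathcal{L}}$ modulo the relative canonical class as a $\mathbb{Q}$-divisor supported in the central fiber, the DF invariant can be rewritten on $\tilde{\mathcal{X}}$ as $\sum_{i}c_{i}\bigl(E_{i}\cdot(\mu^{*}\bar{\mathcal{L}})^{n}\bigr)$ for rational coefficients $c_{i}$ that are essentially the log discrepancies of $(X,0)$ along the divisorial valuations on $X$ induced by the $E_{i}$. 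Since $\mu^{*}\bar{\mathcal{L}}$ is semi-ample and relatively big, each factor $\bigl(E_{i}\cdot(\mu^{*}\bar{\mathcal{L}})^{n}\bigr)$ is non-negative, and is strictly positive for at least one $i$ whenever the test configuration is nontrivial (by the relative ampleness of $\bar{\mathcal{L}}$).

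In case (ii), log-terminality of $X$ makes all the $c_{i}$ strictly positive, so $\mathrm{DF}>0$ directly. In case (i), the semi-log-canonical hypothesis yields only $c_{i}\geq 0$; strict positivity of $\mathrm{DF}$ on nontrivial configurations is then obtained by exploiting the extra structure afforded by $\lambda=1/m>0$, which allows the intersection number to be rearranged so that the strict positivity of $(\bar{\mathcal{L}}^{n+1})$ on nontrivial configurations (itself a standard consequence of relative ampleness) provides the missing strict inequality.

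The main obstacle will be the non-normality permitted in case (i): since $X$ may be non-normal along its slc boundary, the compactified total space $\bar{\mathcal{X}}$ is typically non-normal, and the discrepancy calculus sketched above is most naturally carried out on a normal variety. I will need to pass to the normalisation $\bar{\mathcal{X}}^{\nu}$ equipped with the conductor $\COND$ as a boundary divisor, and verify that the two contributions coming from opposite branches along $\COND$ cancel in the intersection-number formula, so that the positivity analysis on $\bar{\mathcal{X}}^{\nu}$ under the ``log-canonical plus conductor'' hypothesis translates faithfully back to $\bar{\mathcal{X}}$. Once this non-normal bookkeeping is handled, the argument reduces to the normal case treated above.
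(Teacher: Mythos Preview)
Your overall strategy---compute the Donaldson--Futaki invariant via the intersection formula, then separate it into a ``discrepancy'' piece controlled by the singularity hypothesis and a remaining piece controlled by the sign of $K_X$---is exactly the paper's. The gap is in case~(i), where your argument for strict positivity does not work.

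You assert that $(\bar{\mathcal{L}}^{n+1})>0$ for nontrivial test configurations, ``a standard consequence of relative ampleness'', and plan to use this to upgrade the non-strict inequality $c_i\geq 0$ coming from the slc hypothesis. This is false. Relative ampleness over $\mathbb{P}^{1}$ does not make $\bar{\mathcal{L}}$ nef on the compactification, and in the paper's blow-up model one has in fact $(\overline{\mathcal{L}(-E)})^{n+1}\leq 0$, with strict inequality e.g.\ when the flag ideal is supported in dimension zero (the paper remarks on this explicitly). More basically, $(\bar{\mathcal{L}}^{n+1})$ changes under twisting $\bar{\mathcal{L}}$ by multiples of the central fibre, so without a normalisation the claim is not even well-posed; only the full DF expression is twist-invariant.

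What the paper actually does for case~(i) is prove that the \emph{canonical divisor part} $(\overline{\mathcal{L}(-E)}^{n}\cdot\overline{\mathcal{L}(nE)})$ is strictly positive. This is not a soft step: it uses an elementary polynomial identity $(x-y)^n(x+ny)=x^{n+1}-\sum_{i=1}^{n}(n+1-i)(x-y)^{n-i}x^{i-1}y^2$ to rewrite the intersection number as a positive combination of terms of the form $(-E^{2}\cdot\overline{\mathcal{L}(-E)}^{\,n-i}\cdot\bar{\mathcal{L}}^{\,i-1})$, and then shows each of these non-negative (Hodge index theorem after cutting by general hyperplanes), with at least one strictly positive depending on $\dim\Supp(\mathcal{O}/\mathcal{J})$. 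This Hodge-index step is the missing idea in your outline. Note also that case~(ii) in the paper is not independent: the strict inequality $(\overline{\mathcal{L}(-E)}^{n}\cdot E)>0$ needed there is deduced from the case~(i) computation, so the same lemma is doing the work in both parts.

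Finally, your concern about non-normality in case~(i) is legitimate, but the paper handles it differently: rather than normalising the test configuration and tracking the conductor, it reduces (via Proposition~\ref{formula.enough}) to semi test configurations of blow-up type $\mathcal{B}=Bl_{\mathcal{J}}(X\times\mathbb{A}^{1})$, for which the discrepancy term and its non-negativity under the slc hypothesis were already treated in \cite{Od09b}. Your sketch also implicitly needs this reduction, since for an arbitrary test configuration the components $E_i$ of the central fibre do not define valuations on $X$, so ``log discrepancies of $(X,0)$ along $E_i$'' has no meaning until you dominate by something mapping to $X\times\mathbb{A}^1$.
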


\textcolor{black}{The semi-log-canonicity or the log terminality (which is stronger) is the mildness of singularities, and is defined in terms of \textit{discrepancy}, which is developed along 
the minimal model program (cf.\ e.g., \cite[section 2.3]{KM98}, \cite{Ale96}). 
For the general effects of 
singularities on stability, consult \cite{Od09a}. }
We have the following differential geometric 
background, originally known as the Calabi conjecture, which became a theorem 
more than thirty years ago. 

\begin{Fac}\label{fact:AY}

{\rm (i)}({\cite{Aub76}, \cite{Yau78}})
A smooth projective manifold $X$ with ample canonical divisor $K_{X}$ has a K\"ahler-Einstein metric. 

{\rm (ii)}({\cite{Yau78}})
A smooth polarized manifold $(X,L)$ with numerically trivial canonical divisor $K_{X}$ has a K\"ahler-Einstein metric with K\"ahler class 
$c_{1}(L)$. 
\end{Fac}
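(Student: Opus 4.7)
My plan is to reduce both parts of Fact~\ref{fact:AY} to the solvability of a complex Monge--Amp\`ere equation on $X$ and then to run the continuity method, following Aubin and Yau. For (i), since $K_X$ is ample, I would fix a reference K\"ahler form $\omega_0 \in c_1(K_X)$ and seek a K\"ahler--Einstein metric $\omega = \omega_0 + \sqrt{-1}\,\partial\bar\partial \varphi$ in the same class with $\mathrm{Ric}(\omega) = -\omega$. Via the $\partial\bar\partial$-lemma, the Einstein condition becomes the scalar equation
\[
(\omega_0 + \sqrt{-1}\,\partial\bar\partial \varphi)^n = e^{\varphi + F}\,\omega_0^n,
\]
where $F \in C^{\infty}(X)$ is determined by $\mathrm{Ric}(\omega_0) + \omega_0 = \sqrt{-1}\,\partial\bar\partial F$. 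For (ii), I would fix $\omega_0 \in c_1(L)$ and, using $c_1(X) = 0$, reduce via Calabi's formulation to finding $\varphi$ with
\[
(\omega_0 + \sqrt{-1}\,\partial\bar\partial \varphi)^n = e^{F}\,\omega_0^n,
\]
where $F$ satisfies $\mathrm{Ric}(\omega_0) = \sqrt{-1}\,\partial\bar\partial F$ and is normalized so that $\int_X(e^F-1)\,\omega_0^n = 0$; the resulting $\omega = \omega_0 + \sqrt{-1}\,\partial\bar\partial \varphi$ is then Ricci-flat.

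Next I would set up the continuity method by deforming each equation along a one-parameter family $(\mathrm{MA})_t$ for $t \in [0,1]$ from a trivial equation at $t=0$ to the target at $t=1$, and aim to show that the set $S \subset [0,1]$ of parameters admitting an admissible smooth solution is nonempty, open, and closed. Openness will follow from the implicit function theorem on standard H\"older spaces, since the linearization of the Monge--Amp\`ere operator at an admissible $\varphi_t$ reduces, up to constants, to $\Delta_{\omega_t} - 1$ in case (i) and to $\Delta_{\omega_t}$ restricted to mean-zero functions in case (ii); both are isomorphisms between the relevant spaces.

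Closedness is where the real work lies, and the hardest single step is the uniform $C^0$ estimate in case (ii). In case (i) the favourable sign of $e^{\varphi+F}$ allows the maximum principle applied at extrema of $\varphi_t$ to bound $\|\varphi_t\|_{C^0}$ directly in terms of $F$. In case (ii) there is no such sign, and I would follow Yau's Moser iteration argument: multiply the equation by $|\varphi_t|^{p-1}\varphi_t$, integrate by parts, apply the Sobolev inequality, and iterate in $p$ to extract a uniform $L^{\infty}$-bound. Once $C^0$ is controlled, Yau's second-order estimate --- obtained by computing $\Delta_{\omega_t}\log(n + \Delta_{\omega_0}\varphi_t)$ and using a lower bound for the bisectional curvature of $\omega_0$ --- yields the $C^2$-bound; $C^{2,\alpha}$ follows from Evans--Krylov (or Calabi's third-order estimate) applied to the concave equation $\log\det(\cdot) = \text{known}$, and higher regularity by a standard Schauder bootstrap.

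Combining openness and closedness, the continuity set equals all of $[0,1]$, so the target equation has a smooth admissible solution $\varphi$, and $\omega = \omega_0 + \sqrt{-1}\,\partial\bar\partial \varphi$ provides the desired K\"ahler--Einstein metric in each case (negatively curved in (i), Ricci-flat in (ii)). As noted, the principal obstacle is Yau's $C^0$ estimate in the Ricci-flat case, which is the technical heart of the argument.
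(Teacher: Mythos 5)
The paper does not prove this statement: it is quoted as a \emph{Fact} with citations to Aubin and Yau, so there is no internal argument to compare against. Your outline is precisely the standard continuity-method proof contained in those references, and it is accurate at the level of detail given --- the reduction to the complex Monge--Amp\`ere equation (with the correct sign and normalization in each case), openness via invertibility of $\Delta_{\omega_t}-1$ (resp.\ $\Delta_{\omega_t}$ on mean-zero functions), and closedness via the maximum-principle $C^0$ bound in the negative case, Moser iteration in the Ricci-flat case, Yau's second-order estimate, Evans--Krylov/Calabi, and bootstrap. No gaps beyond what is inherent in a sketch of this length.
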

Let us recall the following conjecture on general existence of K\"ahler-Einstein 
metrics, which was recently formulated. 

\begin{Conj}[cf.\ \cite{Yau90}, \cite{Tia97}, \cite{Don02}]\label{DTY}
Let $(X,L)$ be a smooth polarized manifold with $c_{1}(X)=ac_{1}(L)$ with $a\in \mathbb{R}$. Then $X$ has a K\"ahler-Einstein metric with K\"ahler class $c_{1}(L)$ 
if and only if $(X,\mathcal{O}_{X}(-K_{X}))$ is 
K-polystable. 
\end{Conj}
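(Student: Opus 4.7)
The conjecture becomes substantive only in the Fano regime $a>0$. For $a=0$ and $a<0$ both sides of the biconditional hold automatically on any smooth $X$ satisfying the cohomological hypothesis: K-stability (hence K-polystability of $(X,L)$) is Theorem \ref{i.moreKst}, and existence of a K\"ahler-Einstein metric is Fact \ref{fact:AY}. (For $a\le 0$ the symbol $\mathcal{O}_{X}(-K_{X})$ is not ample and should be read as $L$, or the conjecture interpreted after rescaling the polarization.) The plan is therefore to concentrate on the Fano case, where after rescaling we may take $L=-K_{X}$, and treat the two implications separately.

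For the ``only if'' direction the strategy is to compare the Donaldson-Futaki invariant with the asymptotic slope of Mabuchi's K-energy functional $\nu$ on the space of K\"ahler potentials in $c_{1}(L)$. The first step is to show, for each test configuration $\mathcal{X}$, an asymptotic expansion of $\nu$ along the geodesic ray associated to $\mathcal{X}$ whose leading coefficient is $\DF(\mathcal{X})$; this should be organized around the intersection-theoretic expression of $\DF$ used in \cite{Od09b} (Wang's formula), suitably pushed onto the infinite-dimensional picture. The second step invokes that existence of a K\"ahler-Einstein metric forces $\nu$ to be bounded below and proper modulo $\Aut^{0}(X)$; hence $\DF(\mathcal{X})\ge 0$, with equality only for product test configurations, which is exactly K-polystability.

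For the ``if'' direction, I would attempt a continuity method along the Aubin path $\mathrm{Ric}(\omega_{t})=t\omega_{t}+(1-t)\alpha$ for a fixed reference form $\alpha\in c_{1}(L)$. Openness is a standard implicit function theorem argument. The real content is closedness: if $t_{0}:=\sup\{t:\text{solution exists}\}<1$, then a sequence $\omega_{t_{i}}$ with $t_{i}\uparrow t_{0}$ must degenerate, and the aim is to convert this analytic degeneration into an algebraic test configuration of $(X,L)$ with $\DF\le 0$, contradicting the assumed K-polystability. Three ingredients are required: (a) a partial $C^{0}$ estimate embedding all $\omega_{t_{i}}$ uniformly into a fixed projective space $\mathbb{P}^{N}$ via $|mK_{X}^{-1}|$; (b) Gromov-Hausdorff convergence of a subsequence to a normal $\mathbb{Q}$-Fano limit with log-terminal singularities; and (c) an algebraization step producing a one-parameter equivariant degeneration of $X$ onto this limit, to which the intersection-theoretic $\DF$ formula can be applied.

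The principal obstacle will be step (a), the partial $C^{0}$ estimate, which is genuinely analytic: it demands uniform lower bounds on peak sections of $-mK_{X}$, and these in turn require fine control of eigenfunctions of the Laplacian under noncollapsed Ricci-bounded degenerations in the Cheeger-Colding-Tian sense. This is well beyond the purely algebro-geometric toolkit used in the present paper. Once (a) is granted, (b) follows from standard compactness together with regularity of limit spaces, and (c) can be approached via equivariant Hilbert scheme arguments. The $\DF$ of the resulting central fibre will then be nonpositive by the Perelman--type monotonicity of $\nu$ along the Aubin path, yielding the desired contradiction and completing the equivalence.
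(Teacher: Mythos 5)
The statement you are trying to prove is presented in the paper as a \emph{Conjecture} (Conjecture \ref{DTY}), not a theorem: the paper contains no proof of it, records only the ``metric implies K-polystability'' direction as Fact \ref{fact:YTD} (attributed to the analytic work of Tian, Donaldson, Chen--Tian, Stoppa and Mabuchi), and the whole point of Theorems \ref{moreKst.1} and \ref{moreKst.2} is to give an independent algebraic proof of the stability side in the cases $a<0$ and $a=0$ only. Your reduction of those two cases to Theorem \ref{i.moreKst} and Fact \ref{fact:AY} is correct and matches the paper's logic, but it leaves the entire content of the conjecture --- the Fano case $a>0$ --- untouched, and there your text is a research program rather than a proof.

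Concretely, the gaps are the ones you yourself flag plus one you understate. In the ``if'' direction, step (a) (the partial $C^{0}$ estimate along the Aubin continuity path), step (b) (that the Gromov--Hausdorff limit is a normal $\mathbb{Q}$-Fano variety with log-terminal singularities), and step (c) (algebraization into an equivariant test configuration whose central fibre is that limit, together with the inequality $\DF\le 0$ for it) are each substantial theorems that are proved nowhere in this paper or its references; asserting the plan does not discharge them. In the ``only if'' direction, the equality case is also not free: lower boundedness or even properness of the K-energy modulo $\Aut^{0}(X)$ yields $\DF\ge 0$ for all test configurations, but concluding that $\DF=0$ forces a \emph{product} configuration is exactly the delicate point for which the paper must cite \cite{Stp09}, \cite{Mab08b}, \cite{Mab09}, and which at the time was only known under extra hypotheses. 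As written, your proposal should be regarded as a correct description of a known strategy for an open problem, not as a proof of Conjecture \ref{DTY}.
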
 
\noindent
From the recent progress in Conjecture \ref{DTY} (in particular, \cite{Tia97}, \cite{Don05}, \cite{CT08}, \cite{Stp09}, \cite{Mab08b} and \cite{Mab09}), one direction is proved as follows. 
\begin{Fac}\label{fact:YTD}
If a projective manifold $X$ admits a 
K\"ahler-Einstein metric with K\"ahler class $c_1(L)$, then $(X,L)$ is K-polystable. 
\end{Fac}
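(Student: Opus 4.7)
The plan is to prove Fact \ref{fact:YTD} by the now-standard strategy using the Mabuchi K-energy functional $\mathcal{M}$ as a bridge: a test configuration for $(X,L)$ gives rise to a (sub)geodesic ray in the space of K\"ahler potentials, the asymptotic slope of $\mathcal{M}$ along that ray equals a positive multiple of the Donaldson--Futaki invariant, and the existence of a K\"ahler--Einstein metric forces $\mathcal{M}$ to be bounded below. The argument distills the contributions of Tian, Paul--Tian, Donaldson, Phong--Ross--Sturm, Chen--Tian, Stoppa and Mabuchi cited in the excerpt.

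First I would fix an arbitrary test configuration $(\mathcal{X},\mathcal{L})\to \mathbb{A}^{1}$ for $(X,L)$ and, following Phong--Sturm, associate to it a weakly $C^{1,1}$ subgeodesic ray $\{\varphi_{s}\}_{s\geq 0}$ of $\omega$-plurisubharmonic potentials on $X$, where $s=-\log|t|$ and $\varphi_{s}$ is the pull-back of the natural Hermitian metric on $\mathcal{L}$ via the $\mathbb{C}^{*}$-trivialisation $X\simeq \mathcal{X}_{t}$ off the central fibre. Then I would establish the slope identity
\[
\lim_{s\to \infty}\frac{\mathcal{M}(\varphi_{s})}{s}\;=\;c\cdot \DF(\mathcal{X},\mathcal{L}),\qquad c>0,
\]
via Bergman-kernel approximation: the finite-dimensional Bergman approximation $\mathcal{M}_{N}$ of $\mathcal{M}$ has asymptotic slope equal to the Chow weight of the induced $\SL$-action on $H^{0}(X,L^{\otimes N})$, and the $N\to \infty$ leading coefficient of these weights is by definition $\DF(\mathcal{X},\mathcal{L})$.

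The existence of a KE metric in $c_{1}(L)$ then implies, by Bando--Mabuchi in the Fano situation and by standard convexity/positivity arguments in the canonically polarised and Calabi--Yau cases, that $\mathcal{M}$ attains its minimum and is in particular bounded below on the K\"ahler class. Combined with the slope identity this forces $\DF(\mathcal{X},\mathcal{L})\geq 0$ for every $(\mathcal{X},\mathcal{L})$, i.e.\ K-semistability. To upgrade to K-polystability I would invoke the Stoppa/Mabuchi perturbation: a non-product test configuration with $\DF=0$ can be perturbed by blowing up a suitable point with small weight to produce a new test configuration whose DF is strictly negative, contradicting semistability of the blow-up (which again carries a cscK metric by Arezzo--Pacard-type gluing); hence $\DF=0$ forces a product configuration.

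The main obstacle is the slope identity: matching the purely analytic asymptotics of $\mathcal{M}$ with the algebraic definition of $\DF$ becomes delicate when the central fibre $\mathcal{X}_{0}$ is singular, possibly non-reduced, and the potentials $\varphi_{s}$ are only of limited regularity. This is the substance of the sequence of papers cited in the excerpt and is the deepest input in the singular setting. A secondary technical issue is the polystability upgrade, which requires careful bookkeeping of the leading-order DF contribution produced by the perturbation.
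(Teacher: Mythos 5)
The paper does not actually prove Fact \ref{fact:YTD}: it is recorded as a known fact and attributed to \cite{Tia97}, \cite{Don05}, \cite{CT08}, \cite{Stp09}, \cite{Mab08b} and \cite{Mab09}, so there is no internal argument to compare yours against. Your outline is indeed the strategy of that body of work, but as a proof it has two genuine gaps beyond the obstacles you already flag. First, the exact slope identity $\lim_{s\to\infty}\mathcal{M}(\varphi_{s})/s=c\cdot\DF(\mathcal{X},\mathcal{L})$ is false in general: when the central fibre $\mathcal{X}_{0}$ is non-reduced, the asymptotic slope of the K-energy along the associated ray is strictly smaller than the corresponding positive multiple of the Donaldson--Futaki invariant (the discrepancy being controlled by $\mathcal{X}_{0}-\mathcal{X}_{0,\mathrm{red}}$). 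This happens to be harmless for semistability, because the inequality $\mathrm{slope}\leq c\cdot\DF$ points the right way once $\mathcal{M}$ is bounded below, but the argument must be phrased as an inequality; and the justification of even that inequality --- the interchange of the limits $N\to\infty$ and $s\to\infty$ in the Bergman approximation, against potentials of only $C^{1,1}$ regularity --- is precisely the content of \cite{Don05} and \cite{CT08} and is not supplied by your sketch.

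Second, the polystability upgrade is not a matter of ``careful bookkeeping.'' Stoppa's blow-up perturbation \cite{Stp09} yields K-\emph{stability} under the additional hypothesis that $\Aut(X,L)$ is discrete: it needs a suitably generic point to blow up, and the Arezzo--Pacard-type gluing producing a cscK metric on the blow-up imposes conditions incompatible with continuous symmetries. Ruling out non-product test configurations with $\DF=0$ in the presence of a positive-dimensional automorphism group --- which is exactly what distinguishes K-polystability from K-stability, and is the case relevant to, e.g., Fano or Calabi--Yau manifolds with torus actions --- is what \cite{Mab08b} and \cite{Mab09} are cited for, and at the level of your sketch that step is absent rather than merely delicate. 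If you restrict to the situation the paper actually exploits (discrete $\Aut(X,L)$), your plan closes up into the standard proof of K-stability; for the polystability statement as written, it does not.
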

Combining Fact \ref{fact:AY} and \ref{fact:YTD}, we find that smooth 
canonically polarized manifolds and smooth polarized Calabi-Yau manifolds over 
$\mathbb{C}$ are K-polystable. 
In this point of view, the purpose of this paper is recovering this relation directly in  purely algebro-geometric way and to give some applications. 

\textcolor{black}{For the first application}, we note that K-stability of $(X,L)$ is slightly stronger than K-polystability (cf.\ e.\ g.\ \cite[section 3]{RT07}), in the sense that it requires that the automorphism group $\Aut(X,L)$ does not contain any subgroup isomorphic to $\mathbb{G}_{m}$. 
\textcolor{black}{Therefore}, combining Theorem \ref{i.moreKst} {\rm (ii)} with the theorem of Matsushima \cite{Mat57}, which works for orbifolds in general, 
we have the following corollaries. 

\begin{Cor}\label{aut:fin.CY}
Let $(X,L)$ be a polarized $($projective$)$ orbifold 
with numerically trivial canonical divisor $K_X$. Then, $\Aut(X,L)$ is a finite group. 
\end{Cor}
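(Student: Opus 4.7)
The plan is to combine the K-stability furnished by Theorem \ref{i.moreKst} (ii) with Matsushima's reductivity theorem in its orbifold formulation. Since an orbifold has only quotient singularities, which are log-terminal, the hypothesis $K_{X}\equiv 0$ places $(X,L)$ exactly in the setting of Theorem \ref{i.moreKst} (ii); hence $(X,L)$ is K-stable.

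The crucial consequence of K-stability, as opposed to mere K-polystability, is that $\Aut(X,L)$ contains no subgroup isomorphic to $\mathbb{G}_{m}$: any such one-parameter subgroup would generate a nontrivial product test configuration whose Donaldson-Futaki invariant vanishes, violating the strict positivity required by K-stability.

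On the analytic side, the orbifold analogue of Fact \ref{fact:AY} (ii) supplies a K\"ahler-Einstein orbifold metric on $X$ in the class $c_{1}(L)$. Matsushima's theorem, which as the paper observes remains valid in the orbifold setting, then yields that $\Aut^{0}(X,L)$ is reductive. A connected reductive group of positive dimension always contains a copy of $\mathbb{G}_{m}$ inside any maximal torus; combined with the previous paragraph, this forces $\Aut^{0}(X,L)$ to be trivial. Since $\Aut(X,L)$ is a linear algebraic group of finite type, it has only finitely many connected components, and so triviality of the identity component implies finiteness of the whole group.

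The substantive inputs are Theorem \ref{i.moreKst} (ii) together with the orbifold versions of the Calabi-Yau existence theorem and of Matsushima's theorem; everything else is formal bookkeeping linking the K-stability versus reductivity dichotomy to the structure theory of linear algebraic groups. I expect no genuine obstacle beyond citing these known orbifold generalizations.
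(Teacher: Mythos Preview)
Your argument is correct and coincides with the paper's own proof: the paper likewise observes that K-stability (from Theorem~\ref{i.moreKst}(ii), using that quotient singularities are log-terminal) forbids any $\mathbb{G}_m$ in $\Aut(X,L)$, and then invokes Matsushima's theorem in the orbifold setting to force $\Aut^0(X,L)$ to be reductive, hence trivial. Your write-up simply makes explicit the intermediate steps (the orbifold Yau theorem supplying the K\"ahler--Einstein metric needed for Matsushima, and the passage from trivial identity component to finiteness) that the paper leaves implicit.
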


\begin{Cor}{\label{aut.CY}}
Let $X$ be a projective orbifold with numerically trivial canonical divisor $K_X$. 
Then, the connected component $\Aut^{0}(X)$ of the automorphism group $\Aut(X)$ 
is an abelian variety. 
\end{Cor}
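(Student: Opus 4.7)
My plan is to reduce the claim to Corollary \ref{aut:fin.CY} via Chevalley's structure theorem. Write the connected algebraic group $\Aut^{0}(X)$ as an extension
\begin{equation*}
1 \longrightarrow G \longrightarrow \Aut^{0}(X) \longrightarrow A \longrightarrow 1
\end{equation*}
with $G$ a connected linear algebraic group and $A$ an abelian variety; it then suffices to show $G=\{1\}$.

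To eliminate $G$, I would fix any ample line bundle $L$ on $X$, which exists since $X$ is projective. Because $\Aut^{0}(X)$ is connected while $NS(X)$ is discrete, the induced action of $\Aut^{0}(X)$ on $NS(X)$ is trivial, and therefore $g^{*}L\otimes L^{-1}\in \Pic^{0}(X)$ for every $g\in\Aut^{0}(X)$. This defines a homomorphism of algebraic groups
\begin{equation*}
\phi\colon \Aut^{0}(X)\longrightarrow \Pic^{0}(X),\qquad g\longmapsto g^{*}L\otimes L^{-1}.
\end{equation*}
Since $X$ is a projective orbifold its singularities are quotient, hence rational, and the standard comparison with a resolution of $X$ shows $\Pic^{0}(X)$ is an abelian variety. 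Consequently $\phi|_{G}$ is a morphism from a connected linear algebraic group to an abelian variety, and so must be constant. Hence $g^{*}L\cong L$ for every $g\in G$, meaning $G\subset\Aut(X,L)$. By Corollary \ref{aut:fin.CY} the group $\Aut(X,L)$ is finite, and the connectedness of $G$ then forces $G=\{1\}$, so $\Aut^{0}(X)=A$.

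The genuine mathematical input is already packaged in Corollary \ref{aut:fin.CY} (and behind it, Theorem \ref{i.moreKst}(ii)); the remaining work is a formal group-scheme argument using the triviality of morphisms from linear groups to abelian varieties. The one technical nuance is verifying that $\Pic^{0}(X)$ is an abelian variety, but this follows from rationality of quotient singularities, so I do not expect any serious obstacle beyond calling the earlier results in the right order.
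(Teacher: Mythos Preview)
Your argument is correct and is essentially the paper's own proof: both pick an ample $L$, use the orbit map $g\mapsto g^{*}L$ into (a component of) $\Pic(X)$, invoke that $\Pic^{0}(X)$ is an abelian variety, and combine Chevalley's theorem with Corollary~\ref{aut:fin.CY} to kill any linear part. The only cosmetic difference is the order of steps---you apply Chevalley first and then show $G$ lands in the finite group $\Aut(X,L)$, while the paper first argues the orbit map has finite fibers and then appeals to Chevalley---and your justification that $\Pic^{0}(X)$ is an abelian variety via rationality of quotient singularities is in fact more careful than the paper's bare appeal to normality.
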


\noindent
The author could not find any other proofs of these corollaries which work for 
orbifolds in literatures, although we can also partially prove it via a different approach 
in the case when $X$ has only canonical singularities. Indeed, after taking a $\Aut^{0}(X)$-equivariant resolution $\tilde{X}$ of $X$, these follows from arguments in  \cite[Chapter 11]{Iit82} which says the automorphism of the non-uniruled projective manifold $\tilde{X}$ should not  include $\mathbb{G}_{m}$ nor $\mathbb{G}_{a}$. 






Let us recall that the moduli of stable curves $\bar{M_{g}}$ is constructed in the geometric invariant theory. 
\textcolor{black}{As} higher dimension\textcolor{black}{al generalization}, 
it was recently proved that the \textit{stable varieties} admitting 
\textit{semi-log-canonical singularities} also forms projective moduli as well 
by using LMMP method, not relying on the GIT theory \textcolor{black}{(cf.\ e.\ g.\ \cite{KSB88}, \cite{Kol90}, \cite{Ale94}, \cite{Vie95}, \cite{AH09})}. 
Along the development of that generalization, a fundamental observation was that such a stable variety is \textit{not} 
necessarily asymptotically stable (cf.\ \cite{She83}, 
\cite{Kol90}, \cite[especially 1.7]{Ale94}). 

\textcolor{black}{As a second application of Theorem \ref{i.moreKst}, }
we will prove that \textcolor{black}{there are orbifolds counterexamples with discrete automophism groups}, to the folklore conjecture ``K-(poly)stability implies asymptotic (poly)stability". \textcolor{black}{On the other hand, it seems to be affirmatively proved for the case that the polarized variety is \textit{smooth} with \textit{discrete} automorphism group by Mabuchi and Nitta \cite{MN}. 
Therefore, the problem is quite subtle. }

\textcolor{black}{We should note that 
the first counterexample for \textit{non-discrete} automorphism group case} had been found as a smooth toric Fano $7$-fold by Ono-Sano-Yotsutani \cite{OSY09}. 
Recently, another example was found by Della Vedova and Zu\textcolor{black}{ddas}  \cite{DVZ10} 
which is a smooth rational projective surface whose automorphism group is also  \textit{not} discrete. 
The key \textcolor{black}{for our construction} is the theory on the effects of singularities 
on the \textit{asymptotic} (semi)stability by Eisenbud and Mumford \cite[section $2$]{Mum77}\textcolor{black}{; so-called ``local stability" theory. } \begin{Eliminates}\textcolor{green}{We should note that 
the first counterexample had been found as a smooth toric Fano $7$-fold by Ono-Sano-Yotsutani \cite{OSY09}.} \end{Eliminates} \textcolor{black}{Our} counterexamples also 
have K\"{a}hler-Einstein metrics.

\begin{Cor}[cf.\ Corollary \ref{counterexamples.1}]\label{i.counterexamples}

{\rm (i)}
There are projective orbifolds $X$ with ample canonical divisor\textcolor{black}{s} $K_{X}$ 
which have K\"{a}hler-Einstein $($orbifold$)$ metric\textcolor{black}{s}, 
and $(X, K_{X})$ are K-stable 
but asymtotically Chow unstable. 

{\rm (ii)}
There are polarized orbifolds $X$ with numerically trivial canonical divisor\textcolor{black}{s} $K_{X}$ and discrete automorphism group\textcolor{black}{s} $\Aut(X)$ 
such that for any polarization $L$, $X$ have Ricci-flat K\"{a}hler $($orbifold$)$ 
metric with K\"{a}hler class $c_{1}(L)$ and 
$(X,L)$ \textcolor{black}{are} K-stable but asymptotically Chow unstable. 

\end{Cor}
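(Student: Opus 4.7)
The plan is to construct explicit projective orbifolds satisfying the hypotheses of Theorem~\ref{i.moreKst} whose orbifold singularities are mild enough to be klt (hence semi-log-canonical) but severe enough to violate the Mumford--Eisenbud asymptotic local stability bound \cite[Section~2]{Mum77}. Given such examples, K-stability is immediate from Theorem~\ref{i.moreKst}, the K\"ahler--Einstein metrics are supplied by the orbifold versions of Aubin--Yau in case~(i) and Yau in case~(ii), and asymptotic Chow instability is detected locally at the singular orbifold points.

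For~(i), I would take a smooth canonically polarized manifold $Y$ (for instance a complete intersection or a weighted hypersurface with ample canonical class) carrying a faithful action of a finite group $G$ whose fixed-point set consists of isolated points with prescribed (sufficiently degenerate) linearized action, and set $X=Y/G$; then $X$ is a projective orbifold with ample $K_X$ and only isolated quotient singularities. For~(ii), I would take a smooth polarized Calabi--Yau manifold $Y$ (say a K3, an abelian surface, or a higher-dimensional analogue) and a finite group $G$ preserving the holomorphic volume form, then again set $X=Y/G$. In both cases quotient singularities are klt, hence log-terminal and semi-log-canonical, so Theorem~\ref{i.moreKst}~(i) and~(ii) respectively yield K-stability of $(X,\mathcal{O}_X(mK_X))$ and of $(X,L)$; discreteness of $\Aut(X)$ in~(ii) then follows from Corollary~\ref{aut:fin.CY}. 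Orbifold K\"ahler--Einstein metrics are provided by the orbifold Aubin--Yau theorem in~(i) and the orbifold Yau theorem in~(ii), both of which apply since $X$ is globally a quotient of a smooth manifold by a finite group in the sense of orbifolds.

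To witness asymptotic Chow instability, I would invoke the Mumford--Eisenbud local stability criterion: Chow semistability of an embedded $(X,kL)\hookrightarrow\mathbb{P}^{N_k}$ imposes a bound on certain local invariants (multiplicity, embedding dimension, etc.)\ at every closed point of $X$ in terms of the degree, dimension, and the embedding. When this invariant at a quotient-singular point is made too large, the bound fails uniformly for all large Veronese twists, so $(X,kL)$ is Chow unstable for all $k\gg 0$, i.e.\ $(X,L)$ is asymptotically Chow unstable. Tuning the local $G$-action (e.g.\ by taking $|G|$ large or choosing weights producing a highly singular quotient cone) ensures that the local invariant at the singular orbifold point violates the bound.

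The principal difficulty is the balance between singularities being klt (so Theorem~\ref{i.moreKst} applies) and being bad enough to violate the Mumford--Eisenbud asymptotic bound; since quotient singularities are automatically klt, the obstruction reduces to the explicit numerical verification that a specific local model produces an invariant exceeding the Chow--stability bound uniformly in $k$. A secondary subtlety in~(ii) is arranging the chosen $G$-action simultaneously to preserve a holomorphic volume form on $Y$ (so that $K_X$ is numerically trivial) and to be non-free at the fixed point (so as to produce the offending quotient singularity), which is handled via the explicit geometry of finite group actions on K3 or abelian varieties.
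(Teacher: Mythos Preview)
Your overall strategy coincides with the paper's: apply Theorem~\ref{i.moreKst} for K-stability, the orbifold Aubin--Yau/Yau theorems for the metrics, and the Eisenbud--Mumford local stability bound for asymptotic Chow instability. The paper makes the last step completely explicit: the relevant criterion is \cite[Proposition~3.12]{Mum77}, which says that asymptotic Chow semistability forces $\mult(x,X)\le (\dim X+1)!$ at every closed point. So one only needs an orbifold point with multiplicity exceeding $(n+1)!$, and the paper exhibits concrete examples (quotients of products of Hurwitz curves, certain weighted hypersurfaces, line-arrangement surfaces for~(i); log Enriques surfaces for~(ii)) where this number is computed.

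There is, however, a genuine gap in your plan for~(ii). You propose to take a finite group $G$ \emph{preserving the holomorphic volume form} on a Calabi--Yau $Y$. First, this restriction is unnecessary: for \emph{any} finite $G$ acting on a Calabi--Yau, the induced character on $H^0(Y,K_Y)$ takes values in roots of unity, so $K_{Y/G}$ is torsion and hence numerically trivial regardless. Second, and more seriously, in the surface case this restriction is fatal: a symplectic automorphism of a K3 (or abelian) surface produces only Gorenstein canonical quotient singularities, i.e.\ $ADE$ singularities, all of which have multiplicity~$2$. Since $(2+1)!=6$, the Mumford bound is never violated and your construction yields no Chow-unstable examples in dimension~$2$. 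The paper avoids this by using \emph{non-symplectic} automorphisms of K3 surfaces (from the tables of \cite{AST09}), obtaining log Enriques surfaces whose quotient singularities have multiplicity $7$ or $17$. You allude to ``higher-dimensional analogues'', but you do not verify that a volume-form-preserving action can produce multiplicity exceeding $(n+1)!$ there either; this would require an explicit check, and the paper gives no such higher-dimensional example. Dropping the volume-preserving hypothesis and passing to non-symplectic cyclic actions, as the paper does, is the correct fix.
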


We will show \textcolor{black}{the} examples explicitly in section $3$. Since our examples have discrete automorphism groups, 
these are also examples which show that Donaldson's result \cite[Corollary 4]{Don01} 
does \textit{not} hold for orbifolds.

\section{K-stability of Calabi-Yau variety and of canonical model}

Firstly, let us recall the definition of K-stability. 
For that, we prepare the following concepts. 

\begin{Def}

A \textit{test configuration} (resp.\ \textit{semi test configuration}) for a polarized variety $(X,L)$ is a 
polarized variety $(\mathcal{X},\mathcal{L})$ with: 
\begin{enumerate}
\item{a $\mathbb{G}_{m}$ action on $(\mathcal{X},\mathcal{L})$}
\item{a proper flat morphism $\alpha\colon \mathcal{X} \rightarrow \mathbb{A}^{1}$}
\end{enumerate}
such that $\alpha$ is $\mathbb{G}_{m}$-equivariant for the usual action on $\mathbb{A}^{1}$: 
\begin{align*}
\mathbb{G}_{m}\times \mathbb{A}^{1}&& \longrightarrow&& \mathbb{A}^{1}\\
                          (t,x)    && \longmapsto    &&    tx,      
\end{align*}
$\mathcal{L}$ is relatively ample (resp.\ relatively semi ample), 
and $(\mathcal{X},\mathcal{L})|_{\alpha^{-1}(\mathbb{A}^{1}\setminus\{0\})}$ is $\mathbb{G}_{m}$-equivariantly isomorphic 
to $(X,L^{\otimes r})\times (\mathbb{A}^{1}\setminus\{0\})$ for some positive integer $r$, called the \textit{exponent}, 
with the natural action of $\mathbb{G}_{m}$ on the latter and the trivial action on the former. 

\end{Def}

\noindent
These concepts above gives a setting to \textit{geometrize} 
one parameter subgroup of general linear group of section of the line bundle, in the 
following sense. 

\begin{Prop}[{\cite[Proposition 3.7]{RT07}}]\label{tc.1-ps}

In the above situation, a one-parameter subgroup of $GL(H^{0}(X,L^{\otimes{r}}))$ is equivalent to the data of a test configuration $(\mathcal{X},\mathcal{L})$ 
with very ample polarization $\mathcal{L}$ and exponent $r$ of $(X,L)$ for $r \gg 0$. 

\end{Prop}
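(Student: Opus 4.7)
The plan is to establish the equivalence in both directions by embedding $X$ projectively and repackaging both sides as $\mathbb{G}_m$-invariant data inside $\mathbb{P}^N \times \mathbb{A}^1$. For $r \gg 0$, the very ample line bundle $L^{\otimes r}$ embeds $X \hookrightarrow \mathbb{P}^N := \mathbb{P}(H^0(X, L^{\otimes r})^*)$ as a projectively normal subscheme, and I would take $r$ large enough that simultaneously (a) this embedding is projectively normal, (b) the induced map $\mathrm{Sym}^\bullet H^0(X,L^{\otimes r}) \twoheadrightarrow \bigoplus_m H^0(X, L^{\otimes rm})$ is surjective, and (c) for any test configuration $(\mathcal{X},\mathcal{L})$ of exponent $r$, $R^i\alpha_*\mathcal{L} = 0$ for $i > 0$.

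For the direction from a $1$-parameter subgroup $\lambda\colon \mathbb{G}_m \to GL(H^0(X, L^{\otimes r}))$ to a test configuration, I would take $\mathcal{X}$ to be the Zariski closure in $\mathbb{P}^N \times \mathbb{A}^1$ of the set $\{(\lambda(t)\cdot\phi(x), t) : x \in X,\ t \in \mathbb{G}_m\}$, where $\phi\colon X \hookrightarrow \mathbb{P}^N$ is the embedding, and set $\mathcal{L} := \mathcal{O}_{\mathbb{P}^N}(1)|_{\mathcal{X}}$. This is $\mathbb{G}_m$-equivariant by construction, proper over $\mathbb{A}^1$, and relatively very ample. Flatness follows because every fiber over $t \neq 0$ is $\lambda(t)\cdot\phi(X) \cong X$ with Hilbert polynomial equal to that of $(X, L^{\otimes r})$, and the central fiber, being the limit in $\Hilb^{P}(\mathbb{P}^N)$, inherits the same Hilbert polynomial; over the regular base $\mathbb{A}^1$, constancy of the Hilbert polynomial is equivalent to flatness.

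In the reverse direction, given $(\mathcal{X}, \mathcal{L}) \to \mathbb{A}^1$ with $\mathcal{L}$ relatively very ample of exponent $r$, cohomology and base change yields that $\alpha_*\mathcal{L}$ is locally free of rank $h^0(X, L^{\otimes r}) = N+1$ on $\mathbb{A}^1$, hence a free $k[t]$-module carrying a $\mathbb{G}_m$-equivariant structure lifting the standard action. The compatibility isomorphism $\mathcal{X}|_{\mathbb{A}^1 \setminus \{0\}} \cong X \times (\mathbb{A}^1 \setminus \{0\})$ identifies this sheaf over $\mathbb{G}_m$ with $H^0(X, L^{\otimes r}) \otimes \mathcal{O}_{\mathbb{G}_m}$ equipped with the trivial action on the first factor; comparing this trivialization with the one coming from global freeness over $k[t]$ produces a morphism $\mathbb{G}_m \to GL(H^0(X,L^{\otimes r}))$, that is, a $1$-parameter subgroup $\lambda$. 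The closed embedding $\mathcal{X} \hookrightarrow \mathbb{P}(\alpha_*\mathcal{L})^* \cong \mathbb{P}^N \times \mathbb{A}^1$ then identifies $\mathcal{X}$ with the orbit closure from the first construction, showing the two procedures are mutually inverse.

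The main obstacle will be the bookkeeping at the last step: carefully reconciling the two natural trivializations of $\alpha_*\mathcal{L}$ over $\mathbb{G}_m$ — one from the product structure on $\mathcal{X}|_{\mathbb{G}_m}$ and the other from freeness of $\alpha_*\mathcal{L}$ as a $k[t]$-module — and tracking how the resulting $\mathbb{G}_m$-weights on the central fiber recover the $1$-parameter subgroup (up to the sign convention $\lambda$ versus $\lambda^{-1}$ that varies across the literature). The need to force very ampleness, projective normality, and the vanishing of higher direct images simultaneously is precisely what forces the hypothesis $r \gg 0$.
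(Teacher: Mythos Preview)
Your approach matches the paper's sketch: both construct the test configuration from a one-parameter subgroup as the orbit closure of $X\times\{1\}$ inside $\mathbb{P}^N\times\mathbb{A}^1$ (what the paper, following Mabuchi, calls the DeConcini--Procesi family), and both go back by analyzing the pushforward $\alpha_*\mathcal{L}$ as an equivariant bundle on $\mathbb{A}^1$.

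There is, however, one genuine imprecision in your reverse direction. Freeness of $\alpha_*\mathcal{L}$ over $k[t]$ is automatic since $k[t]$ is a PID, and you correctly note that it carries a $\mathbb{G}_m$-equivariant structure. But comparing an \emph{arbitrary} $k[t]$-basis with the product trivialization over $\mathbb{G}_m$ only produces a transition matrix $A(t)\in \GL_{N+1}(k[t,t^{-1}])$, i.e.\ a morphism of varieties $\mathbb{G}_m\to \GL(V)$; nothing forces $A(st)=A(s)A(t)$, so this is not a one-parameter subgroup. What is actually needed --- and what the paper explicitly invokes as ``the $\mathbb{G}_m$-equivariant version of Serre's conjecture on the vector bundle on $\mathbb{A}^1$'' (citing \cite[Lemma~2]{Don05}) --- is that $\alpha_*\mathcal{L}$ is \emph{equivariantly} free. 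Over $\mathbb{A}^1$ this is elementary (a $\mathbb{G}_m$-action lifting the standard one is a $\mathbb{Z}$-grading with $\deg t=1$, and a finitely generated graded free $k[t]$-module admits a homogeneous basis), but it is the substantive point you are missing: the weights of a homogeneous basis give the exponents of $\lambda$, and only with such a basis is your embedding $\mathcal{X}\hookrightarrow \mathbb{P}(\alpha_*\mathcal{L})^*\cong\mathbb{P}^N\times\mathbb{A}^1$ itself $\mathbb{G}_m$-equivariant, so that $\mathcal{X}$ is identified with the orbit closure for that $\lambda$. Once you insert this step, your argument and the paper's coincide.
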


\noindent
In fact, let $\lambda \colon \mathbb{G}_{m}\rightarrow \GL(H^{0}(X,L^{\otimes r}))$ 
be a one parameter subgroup. Then, consider the natural action $\lambda \times \rho$  of $\mathbb{G}_{m}$ on $(\mathbb{P}(H^{0}(X,L^{\otimes r}))\times \mathbb{A}^{1},  \mathcal{O}(1))$ as a polarized variety, 
where  $\rho$ is the multiplication action on $\mathbb{A}^{1}$. 
Then the closure of the orbit $\mathcal{X}:=\overline{((\lambda \times \rho) (\mathbb{G}_{m}))(X \times \{1\})}$ is a test configuration with the natural polarization $\mathcal{O}(1)|_{\mathcal{X}}$ and the restriction of the natural action on 
$(\mathbb{P}(H^{0}(X,L^{\otimes r}))\times \mathbb{A}^{1},  \mathcal{O}(1))$. This is 
called the \textit{DeConcini-Procesi family} of $\lambda$ by Mabuchi in \cite{Mab08a}. The fact that any (very ample) test configuration can be obtained in this way follows from the $\mathbb{G}_{m}$-equivariant version of Serre's conjecture on the vector bundle on $\mathbb{A}^{1}$ (cf.\ \cite[Lemma 2]{Don05}). 

The Donaldson-Futaki invariant is a rational number 
associated to each semi test configuration. For a test configuration, 
it is roughly a sort of GIT weight, which is a leading coefficient of a sequence of Chow weights with respect to twist of the polarization of the test configuration. 

We explain the detailed definition of the Donaldson-Futaki invariants here. 
Let $P(k):=\dim H^0(X,L^{\otimes k})$, which is a polynomial in $k$ of degree $n$ due to  the Riemann-Roch theorem. 
Since the $\mathbb{G}_m$-action preserves the central fibre $(\mathcal{X}_0,\mathcal{L}|_{\mathcal{X}_{0}})$ of $\mathcal{X}$, $\mathbb{G}_m$ acts also on $H^0(\mathcal{X}_0,\mathcal{L}^{\otimes K}\mid_{\mathcal{X}_0})$, where $K\in \mathbb{Z}_{>0}$. 
Let $w(Kr)$ be the weight of the induced action on the highest exterior power of $H^0(\mathcal{X}_0,\mathcal{L}^{\otimes K}\mid_{\mathcal{X}_0})$, which is a polynomial of $K$ of degree $n+1$ due to the Mumford's droll Lemma (cf.\ \cite[Lemma 2.14]{Mum77} and \cite[Lemma 3.3]{Od09b}) and the Riemann-Roch theorem.
Here, the \textit{total weight} of an action of $\mathbb{G}_{m}$ on some finite-dimensional vector space is defined as the sum of all weights, where the \textit{weights} mean the exponents of eigenvalues which should be powers of $t\in \mathbb{A}^1$. 
Let us denote the projection from $\mathcal{X}$ to $\mathbb{A}^{1}$ by $\Pi$. 
Let us take $rP(r)$-th power and 
SL-normalize the action of $\mathbb{G}_{m}$ on 
$(\Pi_{*}\mathcal{L})|_{\{0\}}$, then the corresponding normalized weight on 
$(\Pi_{*}\mathcal{L}^{\otimes K})|_{\{0\}}$ 
is $\tilde{w}_{r,Kr}:=w(k)rP(r)-w(r)kP(k)$, where $k:=Kr$. It is a 
polynomial of form 
$\sum_{i=0}^{n+1}e_{i}(r)k^{i}$ of degree $n+1$ in $k$ for $k \gg 0$, with coefficients which are also 
polynomial 
of degree $n+1$ in $r$ 
for $r \gg 0$ : $e_{i}(r)=\sum_{j=0}^{n+1}e_{i,j}r^{j}$ for $r \gg 0$. 
Since the weight is normalized, $e_{n+1,n+1}=0$. 
The coefficient $e_{n+1,n}$ is 
called the \textit{Donaldson-Futaki invariant} of the test configuration, which we denote by $\DF(\mathcal{X},\mathcal{L})$. 
For an arbitrary \textit{semi} test configuration $(\mathcal{X},\mathcal{L})$ 
of order $r$, we can also define the Donaldson-Futaki invariant as well by setting $w(Kr)$ as the total weight of the  induced action on $H^{0}(\mathcal{X}, \mathcal{L}^{\otimes K})/tH^{0}(\mathcal{X}, \mathcal{L}^{\otimes K})$ (cf. \cite{RT07}). 

Now, we can define K-stability and its versions as follows. 

\begin{Def}
We say that $(X,L)$ is \textit{K-stable} (resp. \textit{K-semistable}) 
if and only if $\DF(\mathcal{X},\mathcal{L})>0$ (resp. $\DF(\mathcal{X},\mathcal{L})\ge 0$) for any non-trivial test configuration. 
On the other hand, \textit{K-polystability} of $(X,L)$ means that $\DF\ge 0$ for any non-trivial test configuration and $\DF(\mathcal{X},\mathcal{L})=0$ only if a test configuration $(\mathcal{X},\mathcal{L})$ is a product test configuration. 
In particular, K-stability implies K-polystability and K-polystability implies 
K-semistability.  
\end{Def}


Let us recall that there are \textcolor{black}{algebro-geometric} formulae 
for the Donaldson-Futaki invariants, which was obtained in \cite{Wan08} and \cite{Od09b}. 

\begin{Thm}\label{DF.formula}

{\rm (i)}$($\cite[Proposition 19]{Wan08}$)$
Let $(\mathcal{X},\mathcal{M})$ be an $($ample$)$ test configuration 
of a polarized variety $(X,L)$, and let us denote its natural compactification 
as $(\bar{\mathcal{X}},  \bar{\mathcal{M}})$, a polarized family over $\mathbb{P}^{1}$ which is trivial (product) over $\mathbb{P}^{1}\setminus \{0 \}$. 
Then, the corresponding Donaldson-Futaki invariant is the following $;$ 

\begin{equation*}
\DF (\mathcal{X}, \mathcal{M})=\dfrac{1}{2(n!)((n+1)!)}
\bigl\{-n(L^{n-1}.K_{X})(\bar{\mathcal{M}}^{n+1})+(n+1)(L^{n})(\bar{\mathcal{M}}^{n}.K_{\bar{\mathcal{X}}/\textcolor{black}{\mathbb{P}^{1}}})\bigr\}. 
\end{equation*}

Here, $K_{\bar{\mathcal{X}}/\mathbb{P}^{1}}$ means the divisor $K_{\bar{\mathcal{X}}}-f^{*}K_{\mathbb{P}^{1}}$ with the projection $f \colon \bar{\mathcal{X}}\rightarrow \mathbb{P}^{1}$. 

{\rm (ii)}$($\cite[Theorem $3.2$]{Od09b}$)$
For any flag ideal $\mathcal{J}\subset \mathcal{O}_{X\times \mathbb{A}^{1}}$ 
$($cf.\ \cite[Definitio\textcolor{black}{n} 3.1]{Od09b}$)$, 
consider the ``semi" test configuration $(Bl_{\mathcal{J}}(X\times \mathbb{A}^{1})=:\mathcal{B},\mathcal{L}(-E))$ of blow up type 
with $($relatively$)$ ``semi"ample $\mathcal{L}(-E)$ where $\Pi^{-1}\mathcal{J}=\mathcal{O}_{\mathcal{B}}(-E)$. 
Here, $\Pi \colon \mathcal{B} \rightarrow X\times \mathbb{A}^{1}$ 
\textcolor{black}{is} the blowing up morphism. 
Let us write its natural compactification \textcolor{black}{as} 
$(Bl_{\mathcal{J}}(X\times \mathbb{P}^{1})=:\bar{\mathcal{B}},\overline{\mathcal{L}(-E)})$, which is obtained by blowing up the same flag ideal $\mathcal{J}$ on 
$X\times \mathbb{P}^{1}$ and \textcolor{black}{let} $p_{i}$ $(i=1,2)$ be the projection from $X\times\mathbb{P}^{1}$. 
Then, \textcolor{black}{if $\mathcal{B}$ is Gorenstein in codimension $1$, }
the Donaldson-Futaki invariant of the semi test configuration can be expanded in
 the following way$;$ 
\begin{equation*}
 \begin{split}
   & 2(n!)((n+1)!)\DF (\mathcal{B},\mathcal{L}(-E)) \\
  &=-n(L^{n-1}.K_{X})(\overline{(\mathcal{L}(-E))}^{n+1})+(n+1)(L^{n})(\overline{(\mathcal{L}(-E))}^{n}.K_{\bar{\mathcal{B}}/\textcolor{black}{\mathbb{P}^{1}}}) \\
  &=-n(L^{n-1}.K_{X})(\overline{(\mathcal{L}(-E))}^{n+1})+(n+1)(L^{n})(\overline{(\mathcal{L}(-E))}^{n}.\Pi^{*}(p_{1}^{*}K_{X})) \\
   & +(n+1)(L^{n})(\overline{(\mathcal{L}(-E))}^{n}.K_{\bar{\mathcal{B}}/X\times \mathbb{P}^{1}}). 
 \end{split}
\end{equation*}
Here, $K_{\bar{\mathcal{B}}/X\times \mathbb{P}^{1}}$ means $K_{\bar{\mathcal{B}}}-\Pi^{*}K_{X\times \mathbb{P}^{1}}$. 
\end{Thm}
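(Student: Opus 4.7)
The plan is to compute $\DF(\mathcal X,\mathcal M)$ directly from its definition as the bidegree $(n+1,n)$ coefficient of the normalised weight polynomial $\tilde w_{r,Kr}=w(k)rP(r)-w(r)kP(k)$ with $k=Kr$, by expressing the total weight $w(K)$ as a polynomial in $K$ whose leading coefficients are intersection numbers on the compactified family $\bar{\mathcal X}\to\mathbb{P}^{1}$, and then specialising the resulting formula to the flag-ideal blow-up for (ii).

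For (i), compactify $(\mathcal X,\mathcal M)$ to $(\bar{\mathcal X},\bar{\mathcal M})\to \mathbb P^{1}$, trivial over $\mathbb{P}^{1}\setminus\{0\}$, and write $f\colon \bar{\mathcal X}\to\mathbb{P}^1$. For $K\gg 0$ the pushforward $f_{*}\bar{\mathcal M}^{\otimes K}$ is a $\mathbb{G}_{m}$-equivariant vector bundle on $\mathbb P^{1}$ of rank $P(Kr)$. The equivariant Grothendieck splitting $f_{*}\bar{\mathcal M}^{\otimes K}\cong\bigoplus_{i}\mathcal O_{\mathbb P^{1}}(a_{i})$, combined with the trivialisation over $\mathbb{P}^{1}\setminus\{0\}$ (which forces the $\mathbb G_m$-weight at the fibre over $\infty$ to vanish), identifies $w(K)$ with $\deg f_{*}\bar{\mathcal M}^{\otimes K}$ up to sign. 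By $\chi(\mathbb P^1,f_*\bar{\mathcal M}^{\otimes K})=\chi(\bar{\mathcal X},\bar{\mathcal M}^{\otimes K})$ on $\mathbb P^1$ this equals $\chi(\bar{\mathcal X},\bar{\mathcal M}^{\otimes K})-P(Kr)$. Now expand both Euler characteristics by asymptotic Riemann--Roch:
\begin{align*}
\chi(\bar{\mathcal X},\bar{\mathcal M}^{\otimes K}) &=\frac{(\bar{\mathcal M}^{n+1})}{(n+1)!}K^{n+1}-\frac{(\bar{\mathcal M}^{n}.K_{\bar{\mathcal X}})}{2\cdot n!}K^{n}+O(K^{n-1}),\\
P(k) &=\frac{(L^{n})}{n!}k^{n}-\frac{(L^{n-1}.K_{X})}{2(n-1)!}k^{n-1}+O(k^{n-2}).
\end{align*}
Use $K_{\bar{\mathcal X}}=K_{\bar{\mathcal X}/\mathbb{P}^{1}}+f^{*}K_{\mathbb{P}^{1}}$ together with the projection formula to rewrite the $K_{\bar{\mathcal X}}$ contribution in terms of $K_{\bar{\mathcal X}/\mathbb{P}^{1}}$ plus a term that is absorbed by the $P(Kr)$ subtraction. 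Substituting into $\tilde w_{r,Kr}$, expanding in the bivariate monomials $r^{j}k^{i}$, checking that the automatic vanishing $e_{n+1,n+1}=0$ holds, and extracting the coefficient $e_{n+1,n}$ gives exactly the formula of (i) with the prefactor $1/(2(n!)((n+1)!))$.

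For (ii), apply the formula of (i) to $(\bar{\mathcal B},\overline{\mathcal L(-E)})$, where $\Pi\colon \bar{\mathcal B}\to X\times\mathbb P^{1}$ is the blow-up along $\mathcal J$. The only new ingredient is the decomposition of the relative canonical divisor
$$K_{\bar{\mathcal B}/\mathbb{P}^{1}}\;=\;\Pi^{*}p_{1}^{*}K_{X}\;+\;K_{\bar{\mathcal B}/X\times\mathbb{P}^{1}},$$
which is a $\mathbb{Q}$-linear equivalence because $\bar{\mathcal B}$ is Gorenstein in codimension $1$ and $K_{X\times\mathbb{P}^{1}/\mathbb{P}^{1}}=p_{1}^{*}K_{X}$. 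Substituting this into the formula from (i) and splitting $(\overline{\mathcal L(-E)}^{n}.K_{\bar{\mathcal B}/\mathbb{P}^{1}})$ into the two claimed pieces yields the three-term expansion. Since $\overline{\mathcal L(-E)}$ is only semi-ample, one uses the definition of $\DF$ for semi test configurations via $H^{0}(\mathcal B,\mathcal L^{\otimes K}(-KE))/tH^{0}$ given in the excerpt; alternatively one perturbs to $\overline{\mathcal L(-E)}+\varepsilon f^{*}H$ with $H$ ample on $\mathbb{P}^{1}$, applies (i), and lets $\varepsilon\downarrow 0$, using continuity of all intersection numbers in $\varepsilon$.

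The main obstacle is the careful bookkeeping of the two normalisations (the $\mathrm{SL}$-normalisation ensuring $e_{n+1,n+1}=0$ and the multiplicative twist by $rP(r)$) together with the rearrangement from the univariate polynomial $w(K)$ to the bivariate expression $\tilde w_{r,Kr}=\sum_{i,j} e_{i,j}r^{j}k^{i}$; any misplaced factorial or sign in the two subleading Riemann--Roch coefficients propagates directly into the final formula, and the prefactor $1/(2(n!)((n+1)!))$ must emerge precisely. A secondary subtlety in (ii) is that $\bar{\mathcal B}$ is typically singular, so $K_{\bar{\mathcal B}}$ and $K_{\bar{\mathcal B}/X\times\mathbb{P}^{1}}$ have to be handled as Weil $\mathbb Q$-divisors --- which is exactly what the ``Gorenstein in codimension $1$'' hypothesis is designed to guarantee.
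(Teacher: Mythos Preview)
The paper does not supply its own proof of this theorem: it is stated as a recall of results proved elsewhere, namely \cite[Proposition~19]{Wan08} for part~(i) and \cite[Theorem~3.2]{Od09b} for part~(ii). Your sketch is correct and is essentially the argument given in those references: identify the total weight $w(K)$ with $\deg f_{*}\bar{\mathcal M}^{\otimes K}=\chi(\bar{\mathcal X},\bar{\mathcal M}^{\otimes K})-P(Kr)$ via the equivariant splitting of the pushforward on $\mathbb P^{1}$, expand both sides by asymptotic Riemann--Roch, and read off $e_{n+1,n}$; then for~(ii) specialise to the blow-up and split $K_{\bar{\mathcal B}/\mathbb P^{1}}=\Pi^{*}p_{1}^{*}K_{X}+K_{\bar{\mathcal B}/X\times\mathbb P^{1}}$.
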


The \textit{flag ideal} $\mathcal{J}\subset 
\mathcal{O}_{X\times \mathbb{A}^{1}}$ \textcolor{black}{means} 
a coherent ideal of the form 
\begin{equation*}
\mathcal{J}=I_{0}+I_{1}t+I_{2}t^{2}+\cdots +I_{N-1}t^{N-1}+(t^N), 
\end{equation*}
where $I_{0} \subset I_{1} \subset \cdots I_{N-1} \subset \mathcal{O}_{X}$ 
is a sequence of coherent ideals \textcolor{black}{of $X$}
(cf.\ \cite[Definitio\textcolor{black}{n} 3.1]{Od09b}). 
\textcolor{black}{The formula} ${\rm (ii)}$ is useful by its form. 
The canonical divisor part is defined as 
\begin{equation*}
\DF_{cdp}(\mathcal{B}, \mathcal{L}(-E))=
-n(L^{n-1}.K_{X})(\overline{(\mathcal{L}(-E))}^{n+1})
+(n+1)(L^{n})(\overline{(\mathcal{L}(-E))}^{n}.\Pi^{*}(p_{1}^{*}K_{X})) 
\end{equation*} and the discrepancy term is defined as 
\begin{equation*}
\DF_{dt}(\mathcal{B}, \mathcal{L}(-E)):=(n+1)(L^{n})(\overline{(\mathcal{L}(-E))}^{n}.K_{\bar{\mathcal{B}}/X\times \mathbb{P}^{1}}). 
\end{equation*}

Obviously, the Donaldson-Futaki invariant is the sum of the canonical divisor part and 
the discrepancy term, up to a positive constant. 
Roughly speaking, the canonical divisor part reflects the positivity of the 
canonical divisor and the discrepancy term reflects the mildness of singularity. 
Consult \cite{Od09b} for the detail. 
In this paper, we use the formula {\rm (ii)} for applications. 
A key for our applications of ${\rm (ii)}$ is that we allow ``semi" test configurations, not only genuine (ample) test configurations, so that the following holds. 

\begin{Prop}[{\cite[Proposition 3.10 $(\rm {ii})$]{Od09b}}]\label{formula.enough}

$(X,L)$ is K-stable \textcolor{black}{if and only if}\color{black}{} 
for all \textcolor{black}{``semi"}
test configurations of the type \ref{DF.formula} ${\rm (ii)}$ $($i.e.\  $(\mathcal{B}=Bl_{\mathcal{J}}(X\times \mathbb{A}^{1}), \mathcal{L}^{\otimes{r}}(-E))$ $)$ with $\mathcal{B}$ Gorenstein in codimension $1$, the Donaldson-Futaki invariant is positive. 

\end{Prop}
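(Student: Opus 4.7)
The plan is to reduce the K-stability condition on $(X,L)$ --- positivity of $\DF$ over all ample test configurations --- to the restricted positivity condition on flag-ideal semi test configurations with $\mathcal{B}$ Gorenstein in codimension $1$. The key point is that any ample test configuration admits a $\mathbb{G}_{m}$-equivariant birational modification to one of the restricted form without changing the Donaldson-Futaki invariant; the converse direction follows by a twist-to-ample perturbation.

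For the substantive ``if'' direction, I would start with an arbitrary nontrivial ample test configuration $(\mathcal{X},\mathcal{M})$ of $(X,L)$ of exponent $r$, and form the normalization $\mathcal{Y}$ of the $\mathbb{G}_{m}$-equivariant closure of the graph of the birational map $\mathcal{X}\dashrightarrow X\times\mathbb{A}^{1}$ (which is an isomorphism over $t\neq 0$), producing projective birational $\mathbb{G}_{m}$-equivariant morphisms $p\colon\mathcal{Y}\to\mathcal{X}$ and $q\colon\mathcal{Y}\to X\times\mathbb{A}^{1}$. Since $q$ is projective, birational, and $\mathbb{G}_{m}$-equivariant, $\mathcal{Y}$ can be identified with $Bl_{\mathcal{J}}(X\times\mathbb{A}^{1})$ for a $\mathbb{G}_{m}$-invariant coherent ideal $\mathcal{J}\subset\mathcal{O}_{X\times\mathbb{A}^{1}}$; the equivariance forces $\mathcal{J}$ to be a flag ideal in the sense of the paper. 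The pullback polarization $p^{*}\mathcal{M}$ restricts to $L^{\otimes r}$ on every generic fiber, so it differs from the standard form $\mathcal{L}^{\otimes r}(-E)$ (with $\mathcal{L}$ the pullback of $L$ from $X$ and $E$ the exceptional divisor of $q$) only by an equivariant line bundle supported on the central fiber; this discrepancy can be absorbed by enlarging $\mathcal{J}$ by a suitable power of $(t)$. A further $\mathbb{G}_{m}$-equivariant flag-ideal blow-up then arranges Gorenstein in codimension $1$.

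The crucial step is then birational invariance of $\DF$ under $\mathbb{G}_{m}$-equivariant pullback of the polarization: for $K\gg 0$, Serre vanishing and the projection formula give $p_{*}(p^{*}\mathcal{M}^{\otimes K})=\mathcal{M}^{\otimes K}$ with higher direct images vanishing, so the $\mathbb{G}_{m}$-weight decompositions of the relevant cohomologies agree. Consequently $\DF(\mathcal{X},\mathcal{M})=\DF(\mathcal{Y},\mathcal{L}^{\otimes r}(-E))$, which is strictly positive by hypothesis, establishing K-stability. For the ``only if'' direction, given a nontrivial flag-ideal semi test configuration $(\mathcal{B},\mathcal{L}^{\otimes r}(-E))$, the twist $\mathcal{L}^{\otimes r'}(-E)$ becomes relatively ample for $r'\gg r$ (by the standard fact that pullback of an ample bundle to a blow-up plus the relatively-ample exceptional correction eventually becomes ample), yielding an honest ample test configuration whose $\DF$ is strictly positive by K-stability; the explicit intersection formula of Theorem~\ref{DF.formula}~(ii), together with the nontriviality of $E$, then yields strict positivity of the original $\DF$.

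The main obstacle is the first direction, specifically arranging $\mathbb{G}_{m}$-equivariantly that the pullback polarization is in the standard form $\mathcal{L}^{\otimes r}(-E)$ and that $\mathcal{Y}$ is Gorenstein in codimension $1$, all while preserving the Donaldson-Futaki invariant. The birational invariance of $\DF$ is the essential tool throughout, and each modification must be performed with care to remain within the flag-ideal framework. The ``only if'' direction is less delicate but still requires bridging the semi-ample to ample gap via the explicit intersection formula.
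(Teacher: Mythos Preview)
Your ``if'' direction is essentially the paper's argument: dominate an arbitrary test configuration by a flag-ideal blow-up semi test configuration and use that the Donaldson--Futaki invariant is unchanged (the paper first quotes the inequality from \cite[Proposition~5.1]{RT07}, which already suffices, and then remarks that equality actually holds by the same section-comparison you invoke).

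Your ``only if'' direction, however, does not match the paper and contains a genuine gap. You pass from the semiample $(\mathcal{B},\mathcal{L}^{\otimes r}(-E))$ to the ample twist $(\mathcal{B},\mathcal{L}^{\otimes r'}(-E))$ for $r'\gg r$ and conclude $\DF>0$ for the twist by K-stability; but you then assert that ``the explicit intersection formula \ldots\ together with the nontriviality of $E$ yields strict positivity of the original $\DF$.'' This step is not justified: the intersection formula shows that $\DF(\mathcal{B},\mathcal{L}^{\otimes r}(-E))$ is a polynomial in $r$ of degree at most $n$, and positivity for all large $r'$ says nothing about its value at the particular $r$ where the polarization is only semiample. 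At best a limiting argument would give $\DF\ge 0$, not strict positivity. The paper takes a different route that avoids this problem entirely: given the semiample $(\mathcal{B},\mathcal{M})$ it \emph{contracts} rather than twists, passing to the ample model $\bigl(\Proj\bigoplus_{k\ge 0}H^{0}(\mathcal{B},\mathcal{M}^{\otimes k}),\mathcal{O}(c)\bigr)$ for sufficiently divisible $c$. Since the graded ring of sections is literally the same, the weight polynomials and hence the Donaldson--Futaki invariants coincide exactly; K-stability then gives $\DF>0$ for the Proj and therefore for the original semi test configuration. You should replace your twist argument by this Proj construction.
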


\noindent
We roughly explain the proof of Proposition \ref{formula.enough}. 
The proof is based on the fact that any non-trivial test configuration is 
birationally dominated by a semi test configuration of the blow up type as above, 
and \cite[Proposition 5.1]{RT07} shows that the dominating semi test configuration 
should have an equal or less Donaldson-Futaki invariant than the Donaldson-Futaki invariant of the original test configuration. Therefore, the if part holds, which is sufficient  for our applications in this paper. Moreover, taking the dominating semi test configuration  carefully after the argument of \cite[section 2]{Mum77}, we can see that those 
two Donaldson-Futaki invariants are actually the same since the global section of 
the twisted polarization of those (semi) test configurations are the same. 
Furthermore, for any semi test configuration $(\mathcal{Y},\mathcal{M})$, 
if we take sufficiently divisible positive integer $c$, we can birationally contract 
$(\mathcal{Y},\mathcal{M}^{\otimes c})$ to get an (ample) test configuration $(\Proj(\oplus_{k\geq 0}H^{0}(\mathcal{Y},
\mathcal{M}^{\otimes k})),\mathcal{O}(c))$ with the same Donaldson-Futaki invariant. 
Therefore, the only if part also holds. 

\textcolor{black}
{Now, let us prove the first main theorem. }

\begin{Thm}\label{moreKst.1}
A semi-log-canonical \textcolor{black}{$($}pluri\textcolor{black}{$)$}canonically polarized variety $(X,\mathcal{O}_{X}(mK_{X}))$\textcolor{black}{, where $m \in \mathbb{Z}_{>0}$,} is K-stable. 
\end{Thm}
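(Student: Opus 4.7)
The plan is to apply the algebro-geometric formula of Theorem \ref{DF.formula}(ii) and exploit the proportionality $L \sim mK_X$. First, since K-stability of $(X, L^{\otimes m})$ is equivalent to that of $(X,L)$ (the Donaldson-Futaki invariant scales homogeneously in the polarization), I reduce to the case $m=1$, so that $L = \mathcal{O}_X(K_X)$. By Proposition \ref{formula.enough}, it then suffices to verify $\DF(\mathcal{B}, \overline{\mathcal{L}(-E)}) > 0$ for every non-trivial semi test configuration arising from a flag ideal $\mathcal{J} \subset \mathcal{O}_{X\times\mathbb{A}^1}$ with $\mathcal{B} = Bl_\mathcal{J}(X\times\mathbb{A}^1)$ Gorenstein in codimension one.

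The decisive feature of canonical polarization is the numerical identity $M + E \equiv r\,\Pi^*p_1^*K_X$ on $\bar{\mathcal{B}}$, where $M := \overline{\mathcal{L}(-E)}$ and $r$ is the exponent. Substituting $\Pi^*p_1^*K_X = (M+E)/r$ in Theorem \ref{DF.formula}(ii) eliminates the horizontal class and expresses $\DF$ entirely in terms of $M$, $E$, and $K_{\bar{\mathcal{B}}/X\times\mathbb{P}^1}$. Moreover, $M+E$ is numerically a pullback from the $n$-dimensional variety $X$, so the dimensional vanishing $(M+E)^{n+1} = 0$ allows one to trade the top self-intersection $M^{n+1}$ for lower mixed intersections $M^{n+1-k}E^k$ with $k\geq 1$. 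After this rearrangement the Donaldson-Futaki invariant takes the form
\[
\DF \;\propto\; (K_X^n)\cdot\bigl[(n+1)\,M^n\cdot(K_{\bar{\mathcal{B}}/X\times\mathbb{P}^1} + E) + \text{(mixed exceptional terms)}\bigr],
\]
where the mixed terms are intersections of the nef class $M$ with Zariski-type exceptional classes supported over $X \times \{0\}$.

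Positivity then rests on two ingredients. First, $M$ is nef since $\overline{\mathcal{L}(-E)}$ is (relatively) semi-ample. Second, and the geometric heart of the argument, is the effectivity
\[
K_{\bar{\mathcal{B}}/X\times\mathbb{P}^1} + E \;\geq\; 0.
\]
I would prove this as follows. Semi-log-canonicity of $X$ implies that of $X\times\mathbb{P}^1$ (product with a smooth curve), so every $\Pi$-exceptional prime divisor $E_i$ on $\bar{\mathcal{B}}$ has discrepancy $a_i \geq -1$ over $X\times\mathbb{P}^1$. On the other hand, since $\Pi$ is an isomorphism outside $V(\mathcal{J})$, each such $E_i$ is contained in $\mathrm{Supp}(E)$ with multiplicity at least $1$. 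Hence the $E_i$-coefficient of $K_{\bar{\mathcal{B}}/X\times\mathbb{P}^1} + E$ is $a_i + \mathrm{mult}_{E_i}(E) \geq 0$, and the same inequality is trivial for non-$\Pi$-exceptional components of $E$.

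The main obstacle I anticipate is twofold. First, a careful sign analysis is needed to ensure that the mixed exceptional intersections introduced via the dimensional vanishing do not spoil the non-negativity produced by the effective combination $K_{\bar{\mathcal{B}}/X\times\mathbb{P}^1} + E$; this hinges on a Zariski-type relative negativity for $\Pi$-exceptional classes supported over $X\times\{0\}$ and on a careful bookkeeping of how signs depend on the exponent $r$. Second, upgrading $\DF \geq 0$ to strict positivity for every non-trivial test configuration will invoke the finiteness of $\Aut(X,K_X)$ for canonically polarized semi-log-canonical $X$, so that the only way for all the non-negative summands to vanish simultaneously is when the flag ideal is of the form $(t^N)$, corresponding to a trivial test configuration.
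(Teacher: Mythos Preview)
Your overall framework matches the paper's: reduce to the blow-up semi test configurations via Proposition \ref{formula.enough}, apply the intersection-theoretic formula of Theorem \ref{DF.formula}(ii), and exploit the proportionality $\bar{\mathcal{L}}\equiv \Pi^*p_1^*(mrK_X)$. Your observation that $K_{\bar{\mathcal{B}}/X\times\mathbb{P}^1}+E$ is effective under the slc hypothesis is exactly the mechanism behind non-negativity of the discrepancy term (this is what the paper imports from \cite{Od09b}). So the set-up and the ``$\DF\geq 0$'' half of the argument are on the right track.

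The genuine gap is in the two places you yourself flag. First, the ``mixed exceptional terms'' are the whole difficulty, and you have not controlled them. The paper separates $\DF=\DF_{cdp}+\DF_{dt}$, accepts $\DF_{dt}\geq 0$ from slc, and then proves $\DF_{cdp}>0$ \emph{directly} by a concrete computation: the polynomial identity $(x-y)^n(x+ny)=x^{n+1}-\sum_{i=1}^n(n+1-i)(x-y)^{n-i}x^{i-1}y^2$ (Lemma \ref{elem.pol}) rewrites $\DF_{cdp}$ as a positive combination of terms of the form $(-E^2.\overline{\mathcal{L}(-E)}^{\,n-i}.\bar{\mathcal{L}}^{\,i-1})$, each of which is shown to be non-negative by the Hodge index theorem after cutting by general hyperplanes (Lemma \ref{piece.ineq}). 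Strict positivity is then extracted by a case split on $s=\dim\Supp(\mathcal{O}/\mathcal{J})$: when $s=n$ the top term $(-E^2.\bar{\mathcal{L}}^{\,n-1})$ is strictly positive; when $s<n$ one perturbs using Lemma \ref{elem.pol}(ii) to introduce a strictly positive term $((-E)^{n+1-s}.\bar{\mathcal{L}}^s)$ coming from relative ampleness of $-E$. Your vague appeal to ``Zariski-type relative negativity'' is a gesture toward Lemma \ref{piece.ineq}, but without the polynomial identity you have no decomposition into summands whose signs you can read off.

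Second, your plan to upgrade $\DF\geq 0$ to $\DF>0$ via finiteness of $\Aut(X,K_X)$ does not work as stated. Finiteness of $\Aut$ only rules out nontrivial \emph{product} test configurations; it says nothing about whether a non-product blow-up type $(\mathcal{B},\mathcal{L}(-E))$ with $\mathcal{J}\neq(t^N)$ could have $\DF=0$. In the paper the logic is the reverse: strict positivity of $\DF$ is established computationally, and the absence of $\mathbb{G}_m\subset\Aut(X)$ is then a \emph{consequence} (Remark \ref{aut:fin.gen.type}). If you wish to argue via equality analysis, you would need to trace through each non-negative summand and show that simultaneous vanishing forces $E=0$; the paper's decomposition makes this feasible (and in fact unnecessary, since one summand is always strictly positive), whereas in your unspecified ``mixed terms'' it is not clear how to proceed.
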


\begin{proof}[Proof of Theorem \ref{moreKst.1}]
We use the formula \ref{DF.formula} $(\rm {ii})$. 
The canonical divisor part of Donaldson-Futaki invariant for $(\mathcal{B}, \mathcal{L}(-E))$ is $m^{n-1}(K_{X}^{n})(\overline{(\mathcal{L}(-E))}^{n}.(\overline{\mathcal{L}(nE)}))$. 
On the other hand, the discrepancy term is nonnegative by semi-log-canonicity (cf.\ \cite[proof of the ``only if part" of Proposition(5.5)]{Od09b}). 
Therefore, it is enough to prove that the canonical divisor part is strictly positive. We note that $\overline{\mathcal{L}(-E)}$ is not necessarily nef, as 
$\overline{(\mathcal{L}(-E))}^{n+1}=(-E)^{n+1}<0$ 
in the case when $\Supp(\mathcal{O}/\mathcal{J})$ has zero dimension. 
We prepare the following elementary Lemma. 

\begin{Lem}\label{elem.pol}

{\rm (i)}We have the following equality of polynomials with two variables; 
\begin{equation*}
(x-y)^{n}(x+ny)=x^{n+1}-\sum_{i=1}^{n}(n+1-i)(x-y)^{n-i}x^{i-1}y^{2}. 
\end{equation*}

{\rm (ii)}
The polynomials $(x-y)^{n-i}x^{i-1}y^{2}$ 
for $1 \leq i \leq n$ are linearly independent 
\textcolor{black}{over $\mathbb{Q}$} and the monomial $x^{s}y^{n+1-s}$ 
can be written as a linear combination of these 
\textcolor{black}{with integer coefficients}, for an arbitrary $s$ with $0<s<n$. 

\end{Lem}

We omit the proof of Lemma \ref{elem.pol}, since it is easy and given by 
simple calculation. 
By using Lemma \ref{elem.pol}, we can decompose the canonical divisor part of the Donaldson-Futaki invariants of 
$(\mathcal{B},\mathcal{L}(-E))$ as follows. 
We note that $(\overline{\mathcal{L}}^{n+1})=0$. 

\begin{equation}\label{DF1}
\DF_{cdp}(\mathcal{B},\mathcal{L}(-E))=m^{n-1}(K_{X}^{n})\bigl\{(-E^{2}.\sum_{i=1}^{n}(n+1-i)\overline{(\mathcal{L}(-E))}^{n-i}.\overline{\mathcal{L}}^{i-1})\bigr\},  
\end{equation}
where $s=\dim(\Supp(\mathcal{O}/\mathcal{J}))$.  

If $s<n$, then the description (\ref{DF1}) can be modified to the following form, 
thanks to Lemma \ref{elem.pol} (\rm ii). 
\begin{equation}\label{DF2}
m^{n-1}(K_{X}^{n})\bigl\{(-E^{2}.\sum_{i=1}^{n}(n+1-i+\epsilon_{i})\bigl(\overline{(\mathcal{L}(-E))}^
{n-i}. \overline{\mathcal{L}}^
{i-1}\bigr)-\epsilon '((-E)^{n+1-s}.\overline{\mathcal{L}}^{s})\bigr\}.  
\end{equation}
\noindent
Here, $\epsilon_{i}(1\geq i \geq n)$ and $\epsilon '$ are real numbers such that $0<|\epsilon_{i}|\ll 1$ and $0<\epsilon ' \ll 1$. 
And we have the following inequalities for each terms. 
\begin{Lem}\label{piece.ineq}

{\rm (i)}
$(-E^{2}.\overline{(\mathcal{L}(-E))}^{n-i}.\bar{\mathcal{L}}^{i-1})\geq 0$ for any $0<i<n$. 

{\rm (ii)}
$((-E)^{n\textcolor{black}{+}1-s}.\bar{\mathcal{L}}^{s})<0$ if $s<n$. 

{\rm (iii)}
$(-E^{2}.\mathcal{L}^{n-1})>0$ if $s=n$. 
\end{Lem}
\begin{proof}[Proof of Lemma \ref{piece.ineq}] 
Let us take a general member of $|lL|$ for $l \gg 0$, which we denote $H$. 
By cutting $X\times \mathbb{P}^{1}$ by $H \times \mathbb{P}^{1}$ 
and repeat it several times, 
we can reduce the proof to the case $i=1$ for $\rm (i)$, 
to the case with $s=0$ for $\rm (ii)$, and to the case when 
$X$ is a nodal curve for ({\rm iii}). 

Then, $\rm (i)$ follows from the Hodge index theorem 
and $\rm (ii)$ follows from the relative ampleness of $(-E)$. 

For $\rm (iii)$, we can assume without loss of generality that $0 \neq I_{0}$ 
(recall that $\mathcal{J}=\sum I_{i}t^{i}$), or in other words, $\mathcal{O}/\mathcal{J}$ is supported on proper closed subset of $X\times \{0\}$. If it is not the case, we can divide $\mathcal{J}$ by some power of $t$ without changing the Donaldson-Futaki invariant. Moreover, we can assume that $X$ is 
smooth by considering the normalization of $X\times \mathbb{P}^{1}$ and the 
pullback of the flag ideal $\mathcal{J}$ to it instead. 
In that case, $\mathcal{O}/\mathcal{J}$ is supported on finite points 
on at least one connected component, since we assumed $0\neq I_{0}$. 
We have $(-E|_{S})^{2}>0$ by relatively ampleness of $-E$ and 
$(-E|_{(X\times \mathbb{P}^{1})\setminus S})^{2}\geq 0$ by the Hodge index theorem. 
This completes the proof of $\rm (iii)$.

\end{proof}
Therefore, $\DF(\mathcal{B}, \mathcal{L}(-E))>0$ follows from 
Lemma \ref{piece.ineq} (i)(ii) for the case with $s<n$, due to the description of 
the Donaldson-Futaki invariant (\ref{DF2}). If $s=n$, 
then $\DF(\mathcal{B}, \mathcal{L}(-E))>0$ follows from 
Lemma \ref{piece.ineq} (i)(iii) and the description of the Donaldson-Futaki invariant  (\ref{DF1}). 
\end{proof}

\begin{Rem}\label{aut:fin.gen.type}
From Theorem \ref{moreKst.1}, the automorphism group $\Aut(X)$ for an arbitrary semi log canonical projective variety $X$ with ample canonical $\mathbb{Q}$-Cartier  divisor $K_{X}$ has no 
nontrivial reductive subgroup. 
Let us recall that it is furthermore a common knowledge that $\Aut(X)$ is 
actually finite for such $X$. 
\textcolor{black}{C}onsult Iitaka's book \cite[Theorem(10.11) and Theorem(11.12)]{Iit82} for the usual proof. 
But it is impressive \textcolor{black}{to the author} 
that these calculation of the Donaldson-Futaki invariants derives such a nontrivial result on $\Aut(X)$, which is a quite different from the usual approach. 

\end{Rem}

\textcolor{black}{Let us proceed to the second main theorem. }

\begin{Thm}\label{moreKst.2}

A log-terminal polarized variety $(X,L)$ with numerically trivial 
canonical divisor $K_{X}$ 
is K-stable. 

\end{Thm}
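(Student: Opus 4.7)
The plan is to apply Theorem~\ref{DF.formula}(ii) together with the criterion of Proposition~\ref{formula.enough}, so that K-stability reduces to proving $\DF(\mathcal{B},\mathcal{L}(-E))>0$ for every non-trivial semi test configuration $(\mathcal{B}=Bl_{\mathcal{J}}(X\times\mathbb{A}^{1}),\mathcal{L}(-E))$ of blowup type with $\mathcal{B}$ Gorenstein in codimension $1$. The first observation is that numerical triviality of $K_{X}$ kills the canonical divisor part, because $(L^{n-1}.K_{X})=0$ and $\Pi^{*}p_{1}^{*}K_{X}\equiv 0$. So, up to a positive constant,
\[
\DF(\mathcal{B},\mathcal{L}(-E))=\DF_{dt}(\mathcal{B},\mathcal{L}(-E))=(n+1)(L^{n})\bigl(\overline{\mathcal{L}(-E)}^{n}.K_{\bar{\mathcal{B}}/X\times\mathbb{P}^{1}}\bigr),
\]
and the entire task becomes showing strict positivity of this single intersection.

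The central new input, compared with the log-canonical case of Theorem~\ref{moreKst.1}, is that log-terminality of $X$ combined with the flag-ideal structure upgrades the relative canonical divisor from effective to \emph{strictly} effective on every $\Pi$-exceptional component. Writing $K_{\bar{\mathcal{B}}/X\times\mathbb{P}^{1}}=\sum a_{i}F_{i}$ as a sum over $\Pi$-exceptional primes, the containment $(t^{N})\subset\mathcal{J}$ forces $V(\mathcal{J})\subset X\times\{0\}$, so every $F_{i}$ maps into $X\times\{0\}$ and hence its multiplicity $b'_{i}$ in $\Pi^{*}(X\times\{0\})$ is at least $1$. By inversion of adjunction the pair $(X\times\mathbb{P}^{1},X\times\{0\})$ is purely log-terminal, since $X\times\{0\}\cong X$ is klt, so the plt discrepancies $a'_{i}$ satisfy $a'_{i}>-1$. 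The standard identity $a_{i}=a'_{i}+b'_{i}$ then yields the strict inequality $a_{i}>0$ for every $i$.

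To upgrade $\DF_{dt}\ge 0$ to strict positivity, I would pass to the associated genuine ample test configuration $(\bar{\mathcal{Y}},\bar{\mathcal{M}})$ via the semi-ample contraction $\phi\colon\bar{\mathcal{B}}\to\bar{\mathcal{Y}}=\Proj\bigoplus_{k}H^{0}(\bar{\mathcal{B}},\overline{\mathcal{L}(-E)}^{\otimes k})$, which has the same Donaldson-Futaki invariant. The projection formula together with $K_{X}\equiv 0$ rewrites the intersection as
\[
\bigl(\overline{\mathcal{L}(-E)}^{n}.K_{\bar{\mathcal{B}}/X\times\mathbb{P}^{1}}\bigr)=\sum_{F_{i}\text{ not }\phi\text{-exceptional}}a_{i}\bigl(\bar{\mathcal{M}}^{n}.\phi_{*}F_{i}\bigr),
\]
and each surviving term is strictly positive by ampleness of $\bar{\mathcal{M}}$. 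The main obstacle is rigorously ensuring that for every non-trivial $\mathcal{J}$ at least one $F_{i}$ survives this pushforward; the expected argument is that if every $\Pi$-exceptional $F_{i}$ were $\phi$-contracted then $\phi$ would factor through $\Pi$ as $\psi\circ\Pi$, forcing $\psi^{*}\bar{\mathcal{M}}=p_{1}^{*}L$, which is nef but not ample on $X\times\mathbb{P}^{1}$ and hence contradicts ampleness of $\bar{\mathcal{M}}$ unless the test configuration itself is trivial.
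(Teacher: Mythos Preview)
Your reduction to showing $(\overline{\mathcal{L}(-E)}^{n}.K_{\bar{\mathcal{B}}/X\times\mathbb{P}^{1}})>0$, and the inversion-of-adjunction argument giving $a_{i}>0$ for every $\Pi$-exceptional $F_{i}$, match the paper exactly. The divergence is in how strict positivity is obtained.

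The paper does \emph{not} pass to the ample model. Since all $a_{i}>0$ and $E=\sum e_{i}F_{i}$ with $e_{i}>0$, the divisor $K_{\bar{\mathcal{B}}/X\times\mathbb{P}^{1}}-cE$ is effective for $0<c\ll 1$; together with nefness of $\overline{\mathcal{L}(-E)}$ this reduces everything to the single inequality $(\overline{\mathcal{L}(-E)}^{n}.E)>0$. That inequality is then read off from two facts already obtained in the proof of Theorem~\ref{moreKst.1} (via Lemma~\ref{piece.ineq}): namely $(\overline{\mathcal{L}(-E)}^{n}.\overline{\mathcal{L}(nE)})>0$ and $(\overline{\mathcal{L}(-E)})^{n+1}\le 0$, whose difference is $(n+1)(\overline{\mathcal{L}(-E)}^{n}.E)$.

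Your route through the ample model and the projection formula is natural, and the displayed identity is correct; the problem is the factoring claim in the last paragraph. Even if every $\Pi$-exceptional $F_{i}$ were $\phi$-exceptional, $\phi$ would \emph{not} factor through $\Pi$. Indeed $\mathcal{O}(-E)$ is the relative $\mathcal{O}(1)$ of the blow-up and hence $\Pi$-ample, so $\phi^{*}\bar{\mathcal{M}}=\overline{\mathcal{L}(-E)}$ is ample on every positive-dimensional $\Pi$-fibre; thus $\phi$ is \emph{finite}, not constant, on $\Pi$-fibres. In other words, ``$F_{i}$ is $\phi$-exceptional'' would only force $\phi$ to contract $F_{i}$ along a ruling transverse to the $\Pi$-ruling, and no factorisation $\phi=\psi\circ\Pi$ follows. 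What you actually need---that not all $F_{i}$ can be $\phi$-exceptional for a non-trivial $\mathcal{J}$---is, after summing with the coefficients $e_{i}$, exactly the statement $(\overline{\mathcal{L}(-E)}^{n}.E)>0$, which is precisely the inequality the paper proves by the Hodge-index and relative-ampleness arguments of Lemma~\ref{piece.ineq}. So your approach is sound in spirit, but the final step as written does not close, and closing it appears to require the paper's intersection-theoretic computation or something of equivalent strength.
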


This theorem with the theorem of Matsushima \cite{Mat57} yields the 
following colloraries. 

\begin{Cor}\label{aut:fin}
Let $(X,L)$ be a polarized $($projective$)$ orbifold 
with numerically trivial canonical 
divisor $K_X$. Then, $\Aut(X,L)$ is a finite group. 
\end{Cor}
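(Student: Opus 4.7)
The plan is to combine Theorem \ref{moreKst.2} with the orbifold version of Matsushima's reductivity theorem. First, since a projective orbifold $X$ has only quotient singularities by finite groups and such singularities are log-terminal, Theorem \ref{moreKst.2} applies and gives that $(X,L)$ is K-stable. As recalled in the introduction, K-stability of a polarized variety is strictly stronger than K-polystability precisely in that it prohibits any subgroup of $\Aut(X,L)$ isomorphic to $\mathbb{G}_m$: the product test configuration associated to a nontrivial $\mathbb{G}_m \subset \Aut(X,L)$ is nontrivial yet has vanishing Donaldson--Futaki invariant.

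Next, on the differential-geometric side, the orbifold generalization of Yau's theorem (Fact \ref{fact:AY}(ii), well-established for projective orbifolds with quotient singularities) furnishes a Ricci-flat K\"ahler orbifold metric on $X$ representing the class $c_1(L)$; this is in particular a constant scalar curvature K\"ahler metric. Matsushima's theorem \cite{Mat57}, which as noted in the introduction remains valid in the orbifold setting, then asserts that the identity component $\Aut^0(X,L)$ is a reductive complex algebraic group.

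To finish, I would use the structure theory of reductive groups: any positive-dimensional connected reductive algebraic group contains a non-trivial maximal torus, and hence a subgroup isomorphic to $\mathbb{G}_m$. Combined with the K-stability obstruction above, this forces $\Aut^0(X,L) = \{e\}$. Since $L$ is ample, $\Aut(X,L)$ is an algebraic group of finite type --- it embeds as a closed subgroup of $\PGL(H^0(X,L^{\otimes k}))$ for a sufficiently divisible $k$ --- and an algebraic group of finite type whose identity component is trivial has finitely many points. Hence $\Aut(X,L)$ is finite.

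The argument is really a short formal chain, so there is no serious obstacle. The one delicate point, which the paper already flags parenthetically, is ensuring that both Yau's existence theorem and Matsushima's reductivity theorem carry over to orbifolds; each is by now standard in the literature for projective orbifolds with quotient singularities, and with those transfers in hand the deduction is immediate from Theorem \ref{moreKst.2}.
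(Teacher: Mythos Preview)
Your proposal is correct and follows essentially the same route as the paper, which simply says to combine Theorem \ref{moreKst.2} with Matsushima's theorem (valid for orbifolds) after noting that K-stability precludes any $\mathbb{G}_m \subset \Aut(X,L)$. You have spelled out in more detail the intermediate steps --- the orbifold Yau theorem supplying the cscK metric needed for Matsushima, the structure theory of reductive groups, and the finite-type property of $\Aut(X,L)$ --- but the argument is the same.
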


\begin{Cor}\label{aut:av}
\textcolor{black}{Let $X$ be a projective orbifold with numerically trivial canonical divisor $K_X$. 
Then, the connected component $\Aut^{0}(X)$ of the automorphism group $\Aut(X)$ 
is an abelian variety. }
\end{Cor}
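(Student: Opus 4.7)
The plan is to deduce Corollary~\ref{aut:av} from Corollary~\ref{aut:fin} by exhibiting $\Aut^{0}(X)$ as a finite cover of an abelian subvariety of $\Pic^{0}(X)$. Fix any ample line bundle $L$ on $X$, which exists because $X$ is projective, and consider the natural morphism of algebraic groups
\[
\phi \colon \Aut^{0}(X) \longrightarrow \Pic^{0}(X),\qquad g \longmapsto g^{*}L \otimes L^{-1}.
\]
The universal family of automorphisms $\Aut^{0}(X) \times X \to X$ produces a family of line bundles $\{g^{*}L\}_{g \in \Aut^{0}(X)}$ on $X$, and hence a classifying morphism $\Aut^{0}(X) \to \Pic(X)$. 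Since $\Aut^{0}(X)$ is connected and sends the identity to $L$, this morphism factors through the connected component $L \otimes \Pic^{0}(X)$; translating by $-L$ yields $\phi$, and a direct check on functors of points shows that $\phi$ is a group homomorphism.

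The kernel of $\phi$ consists of those $g \in \Aut^{0}(X)$ with $g^{*}L \cong L$, so $\ker \phi = \Aut^{0}(X) \cap \Aut(X,L)$. By Corollary~\ref{aut:fin} applied to the polarized orbifold $(X, L)$ with $K_{X} \equiv 0$, the group $\Aut(X, L)$ is finite, and therefore so is $\ker \phi$. Consequently $\phi$ is an isogeny onto its image, which is a closed connected algebraic subgroup of the abelian variety $\Pic^{0}(X)$ and hence is itself an abelian subvariety. Since $\Aut^{0}(X)$ is finite over this proper image, $\Aut^{0}(X)$ is proper; and a smooth connected proper algebraic group is an abelian variety. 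This yields the corollary.

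The only input beyond Corollary~\ref{aut:fin} is the classical fact that $g \mapsto g^{*}L \otimes L^{-1}$ defines a morphism of algebraic groups landing in $\Pic^{0}(X)$, which follows from the universal property of the Picard scheme together with the connectedness of $\Aut^{0}(X)$. I expect no essential obstacle: once the finiteness of $\Aut(X, L)$ (itself the hard input, furnished by Theorem~\ref{moreKst.2} and Matsushima's theorem) is granted, the remainder is routine structure theory of algebraic groups. As a conceptual alternative, one could instead invoke the Chevalley decomposition $1 \to H \to \Aut^{0}(X) \to B \to 1$ (with $H$ connected linear and $B$ an abelian variety) and note that $\phi|_{H}$ must be trivial because linear algebraic groups admit no non-constant morphisms to abelian varieties, forcing $H \subset \ker \phi$, a finite connected group, hence $H$ is trivial and $\Aut^{0}(X) = B$.
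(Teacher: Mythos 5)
Your proposal is correct and follows essentially the same route as the paper: both map $\Aut^{0}(X)$ to the Picard scheme via its action on the class of an ample $L$, use Corollary~\ref{aut:fin} to see that the stabilizer $\Aut(X,L)$ (hence the kernel/fibre) is finite, and conclude by the structure theory of algebraic groups. Your version is marginally cleaner in that you upgrade the orbit map to a group homomorphism $g \mapsto g^{*}L \otimes L^{-1}$ into $\Pic^{0}(X)$ and deduce properness of $\Aut^{0}(X)$ directly, whereas the paper only uses the translation morphism to the component $Z \ni [L]$ as a generically finite map of varieties and then invokes the Chevalley decomposition --- which is exactly the alternative you sketch at the end.
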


\noindent
We explained the proof of Colloraries \ref{aut:fin} in the introduction. 
Given Corollary \ref{aut:fin}, Corollary \ref{aut:av} can be proved as follows. 
Let us recall that $\Aut(X,L)$ is the isotropy subgroup of $\Aut(X)$ for 
the natural action of $\Aut(X)$ on $[L]\in \Pic(X)$ by the definition. 
We recall that the Picard scheme $\Pic^{0}(X)$ of $X$ is 
an abelian variety since $X$ is normal. 
Let $Z$ be a connected component of $\Pic(X)$, which includes $[L]$. Due to Corollary 
\ref{aut:fin}, the restriction of the translation morphism of $[L]$, $\Aut^{0}(X)\to 
Z$ is generically finite (onto the image). 
Since $Z$ is abelian variety, $\Aut^{0}(X)$ should not 
include proper linear algebraic subgroup which is rational. 
Therefore, $\Aut^{0}(X)$ should be 
an abelian variety by the theory of the Chevalley decomposition.

\begin{proof}[Proof of Theorem \ref{moreKst.2}]
From the formula of Donaldson-Futaki invariants \ref{DF.formula} {\rm (ii)} and 
Proposition \ref{formula.enough}, it is enough to prove that 
\[ (\overline{(\mathcal{L}(-E))}^{n}.K_{\bar{\mathcal{B}}/X\times \mathbb{P}^{1}}) \]
is positive. Since $X$ is assumed to be log-terminal, 
$(X\times \mathbb{A}^{1},X\times \{0\})$ is also (purely) log-terminal by the 
inversion of adjunction, which can be proved by considering the resolution of 
$X\times \mathbb{A}^{1}$ of the form $W\times \mathbb{A}^{1}$. 
Therefore, 
any coefficient of  $K_{\mathcal{B}/X\times \mathbb{P}^{1}}$ for exceptional prime divisor is positive.  
On the other hand, $\mathcal{L}(-E)$ is (relatively) semiample (over $\mathbb{A}^{1}$) 
on $\mathcal{B}$, so we have non-negativity of the term. 

Furthermore, since $K_{\mathcal{B}/X\times\mathbb{A}^{1}}-cE$ is effective for $0<c\ll 1$, it is enough to prove 
\begin{equation}\label{CYineq}
 (\overline{(\mathcal{L}(-E))}^{n}.E)>0. 
\end{equation}
Here, we have 
\[ \overline{((\mathcal{L}(-E))}^{n+1})=\overline{(\mathcal{L}(-E))}^{n+1}-(\bar{\mathcal{L}})^{n+1}=
(-E.\sum_{i=0}^{n}(\overline{(\mathcal{L}(-E))}^{i}.\bar{\mathcal{L}}^{n-i})\leq 0 \] 
and on the other hand, 
\[ (\overline{(\mathcal{L}(-E))}^{n}.(\overline{\mathcal{L}(nE)}))>0 \] 
from the proof of Theorem \ref{moreKst.1} and these implies (\ref{CYineq}). 
This ends the proof of Theorem \ref{moreKst.2}. 

\end{proof}

As a final remark in this section, we recall that the \textit{asymptotic} stability of these polarized variety for smooth case is already known by a simple combination of the results of  \cite{Aub76}, \cite{Yau78} and \cite[Corollary 4]{Don01} via 
\textcolor{black}{the existence of K\"ahler-Einstein metrics. }
We note that we can apply \cite[Corollary 4]{Don01} thanks to 
the discreteness of $\Aut(X,L)$ (see Corollary \ref{aut:fin.CY} and 
\cite[Theorem(10.11), Theorem(11.12)]{Iit82}). 

\begin{Prop}[cf.\ \cite{Aub76}, \cite{Yau78}, \cite{Don01}]
{\rm (i)}
A smooth $($pluri$)$canonically polarized manifold $(X, \mathcal{O}_{X}(mK_{X}))$ over 
$\mathbb{C}$, where $m \in \mathbb{Z}_{>0}$, is asymptotically stable. 

{\rm (ii)}
A smooth polarized manifold $(X,L)$ with numerically trivial canonical divisor 
$K_{X}$ is asymptotically stable. 
\end{Prop}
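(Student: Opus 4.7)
The plan is to observe that the proposition is a direct consequence of three ingredients already assembled in the excerpt: the K\"ahler--Einstein existence theorems (Fact \ref{fact:AY}), Donaldson's asymptotic stability theorem \cite[Corollary 4]{Don01}, and the discreteness of the relevant automorphism groups. The task reduces to verifying Donaldson's hypotheses in each of the two cases, after which his theorem can simply be quoted.

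For (i), I would start from Fact \ref{fact:AY}~(i): a smooth projective $X$ with $K_X$ ample carries a K\"ahler--Einstein metric $\omega$ in the class $c_1(K_X)$. Since every K\"ahler--Einstein metric is of constant scalar curvature, the scaled metric $m\omega$ is a cscK metric in the class $c_1(mK_X)$. The automorphism group of a smooth projective variety with $K_X$ ample is finite by \cite[Theorem(11.12)]{Iit82}, which is precisely the discreteness condition Donaldson requires. Applying \cite[Corollary 4]{Don01} then gives asymptotic Chow stability of $(X, \mathcal{O}_X(mK_X))$.

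For (ii), I would instead invoke Fact \ref{fact:AY}~(ii), which produces a Ricci-flat K\"ahler metric in the class $c_1(L)$; this has vanishing scalar curvature and is in particular cscK. The discreteness of $\Aut(X, L)$ can be taken either from Corollary \ref{aut:fin.CY} or directly from \cite[Theorem(10.11), Theorem(11.12)]{Iit82}, since a smooth $X$ with $K_X$ numerically trivial is non-uniruled. Donaldson's theorem then again yields the asymptotic stability.

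There is no substantive technical obstacle here; the proposition has a bookkeeping character and is included to contrast with the purely algebro-geometric K-stability arguments of Theorems \ref{moreKst.1} and \ref{moreKst.2}. The only point I would be careful about is the choice of citation for the discreteness of automorphisms in case (ii): drawing it from Corollary \ref{aut:fin.CY} produces a logically tight but slightly circular-looking chain of reasoning, while drawing it from Iitaka's book gives a completely independent deduction.
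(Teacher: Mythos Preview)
Your proposal is correct and matches the paper's own justification essentially verbatim: the paper explains in the paragraph immediately preceding the proposition that it follows by combining Fact \ref{fact:AY} with \cite[Corollary 4]{Don01}, the latter being applicable thanks to the discreteness of $\Aut(X,L)$ supplied by Corollary \ref{aut:fin.CY} and \cite[Theorem(10.11), Theorem(11.12)]{Iit82}. Your closing caveat about the apparent circularity of citing Corollary \ref{aut:fin.CY} is apt, and the paper itself hedges by citing both sources in parallel.
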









\section{\textcolor{black}{K\"ahler-Einstein,} K-stable but asymptotically unstable orbifolds}

Let us recall the asymptotic stabilities. These notions are the original GIT stability notions for polarized varieties. 

\begin{Def}

A polarized scheme $(X,L)$ is said to be \textit{asymptotically Chow stable} (resp.\ \textit{asymptotically Hilbert stable}, 
\textit{asymptotically Chow semistable}, 
\textit{asymptotically Hilbert semistable}), if for an arbitrary 
$m \gg 0$, $\phi _{m}(X)\subset \mathbb{P}(H^{0}(X, L^{\otimes{m}}))$ is Chow stable (resp.\ Hilbert stable, Chow semistable, 
Hilbert semistable), where $\phi _{m}$ 
is the closed immersion defined by the complete linear system $|L^{\otimes{m}}|$. 
 
\end{Def} 
\noindent
As the Chow-stability (resp.\ Hilbert stability) is a bona fide GIT stability notion, 
we can see it via GIT weights by the Hilbert-Mumford's numerical criterion, which we call  the Chow weights (resp.\ Hilbert weights). 

On the other hand, the Donaldson-Futaki invariant is a limit of Chow weights with respect to the twist of polarization of test configuration. Hence, it has been a natural conjecture of folklore status that K-(poly)stability \textcolor{black}{implies} asymptotic Chow (poly)stability. \textcolor{black}{
In fact, it seems to be affirmatively proved 
recently for the case when the polarized variety is \textit{smooth} 
with a \textit{discrete} automorphism group, by Mabuchi and Nitta \cite{MN}. }

If we admit \textit{non-discrete} automorphism groups, it does not hold \textit{in general} by Ono-Sano-Yotsutani \cite{OSY09}. They showed that an example of toric K\"{a}hler-Einstein manifold constructed in \cite{NP09}, which is non-symmetric in the sense of Batyrev-Selivanova \cite{BS99}, is just a counterexample. It is a smooth toric Fano $7$-fold with $12$ vertices in the Fano polytope and $64$ vertices in the moment polytope. Della Vedova and Zu\textcolor{black}{dd}as \cite[Proposition 1.4]{DVZ10} gave  another counterexample which is the projective plane blown up at four points of which all but one are aligned. It also has a non-discrete automorphism group. 

Here, we give other counterexamples 
\textcolor{black}{with \textit{discrete} automorphism groups, 
but admit quotient singularities. }


The following is the key to prove asymptotic unstability for our examples, which follows \textcolor{black}{from} Eisenbud-Mumford's 
\textit{local stability} theory in \cite[section $3$]{Mum77}. 

\begin{Prop}[{\cite[Proposition 3.12]{Mum77}}]\label{locst} 
For asymptotically Chow semistable polarized variety $(X,L)$, 
$\mult(x,X)\leq (\dim X+1)!$ for any closed point $x \in X$. 
\end{Prop}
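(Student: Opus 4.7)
The plan is to apply the Hilbert-Mumford numerical criterion with a one-parameter subgroup adapted to the point $x$ of high multiplicity. Since $(X,L)$ is asymptotically Chow semistable, for each sufficiently large $m$ the embedding $X \hookrightarrow \mathbb{P}^{N_m}$ via $|L^{\otimes m}|$, with $N_m + 1 = h^0(L^{\otimes m})$, is Chow semistable. Thus for every one-parameter subgroup of $\SL(N_m + 1)$, the induced Chow weight on the Chow point of $X$ is non-negative (in the appropriate sign convention).

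I would construct the test 1-PS as follows. Choose a basis $s_0, s_1, \ldots, s_{N_m}$ of $H^0(X, L^{\otimes m})$ with $s_0(x) \neq 0$ and $s_i(x) = 0$ for all $i \geq 1$, so that in the corresponding homogeneous coordinates $x = [1:0:\cdots:0]$. Let $\lambda_m$ be the 1-PS with weights $(N_m, -1, -1, \ldots, -1)$; this lies in $\SL(N_m + 1)$. The flat limit $X_0 := \lim_{t\to 0}\lambda_m(t) \cdot X$ is then the projective cone with apex $x$ over the image $Y$ of the linear projection of $X$ from $x$, and the Chow weight $e_{\lambda_m}(X)$ coincides with the weight of $\lambda_m$ on the Chow point of $X_0$.

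For a cone over $Y$ of degree $d_m - \mu$ (where $d_m = \deg X = (L^n) m^n$), a direct computation using Mumford's description of the Chow form as the defining polynomial of the incidence $\{(L_0, \ldots, L_n) : L_0 \cap \cdots \cap L_n \cap X \neq \emptyset\}$ on tuples of hyperplanes yields, after tracking which hyperplanes contain the apex, an expression of the form
\begin{equation*}
e_{\lambda_m}(X) \;=\; \alpha\, \mu\, N_m \;-\; \beta\, d_m \;+\; (\text{lower order in } m),
\end{equation*}
with explicit combinatorial constants $\alpha, \beta$ depending only on $n$ whose ratio satisfies $\alpha / \beta \sim 1/(n+1)!$ after substituting the Hilbert polynomial asymptotics $N_m \sim (L^n) m^n/n!$. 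The leading coefficient in $m^{n+1}$ is then a positive multiple of $(n+1)! - \mu$, and asymptotic semistability (valid for all $m \gg 0$) forces $\mu \leq (n+1)!$.

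The main obstacle is the Chow weight calculation for the cone: obtaining the precise constants $\alpha, \beta$ so that the leading term in $m$ captures exactly $\mu - (n+1)!$. This reduces to a careful combinatorial analysis of how the intersection of $X_0$ with $(n+1)$ generic hyperplane cycles distributes weight between the ``apex'' coordinate $z_0$ (weight $N_m$) and the ``base'' coordinates $z_1, \ldots, z_{N_m}$ (each of weight $-1$), and how the apex contribution is governed by the Samuel multiplicity $\mu$ of $\mathcal{O}_{X,x}$. An alternative route, which seems technically cleaner, is to perform the analogous Hilbert weight calculation from the filtration of $H^0(X, L^{\otimes m})$ by powers of $\mathfrak{m}_x$ (whose graded dimensions are governed by $\mu$) and then invoke Mumford's droll lemma to relate the Hilbert weight to the Chow weight in the $m \to \infty$ limit.
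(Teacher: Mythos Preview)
The paper does not supply a proof of this proposition; it is quoted directly from \cite[Proposition~3.12]{Mum77} and used as a black box in Corollary~\ref{counterexamples.1}. Your overall strategy is indeed Mumford's: test Chow semistability of the $|L^{\otimes m}|$-embedding against a one-parameter subgroup concentrated at $x$, extract the fixed-$m$ inequality $\mult_x(X)\le (n+1)\,d_m/h^0(L^{\otimes m})$, and let $m\to\infty$ so that the right-hand side tends to $(n+1)!$.

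Several steps in your execution are incorrect, however. First, the flat limit under your $\lambda_m$ is \emph{not} the projective cone over the projection $Y$ from $x$: that cone has dimension $n+1$, whereas $X_0$ must have dimension $n$. Set-theoretically $X_0$ lies in the hyperplane $\{z_0=0\}$ and is supported on $Y$ itself, with non-reduced or embedded structure accounting for the missing degree $\mu$; already a nodal plane cubic degenerating from its node illustrates this. Second, the asymptotic order is $m^{n}$, not $m^{n+1}$: with SL-normalized weights the Chow weight is proportional to $(n+1)d_m-(N_m+1)\mu$, and both $d_m=(L^n)m^n$ and $N_m+1\sim (L^n)m^n/n!$ are of order $m^n$. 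Third, the ratio you call $\alpha/\beta$ is $1/(n+1)$, not $1/(n+1)!$; the extra $n!$ enters only through the asymptotics of $N_m$, so that the leading $m^n$-coefficient becomes $\frac{(L^n)}{n!}\bigl((n+1)!-\mu\bigr)$. Your final conclusion is correct, but the route you wrote down does not actually produce it.

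Your suggested alternative---computing the weight from the $\mathfrak{m}_x$-adic filtration of $H^0(X,L^{\otimes m})$ and using that the graded pieces are governed by the Hilbert--Samuel function with leading term $\mu\,k^{n-1}/(n-1)!$---is both cleaner and closer to how the argument is actually carried out in \cite[section~3]{Mum77}; I would recommend pursuing that line and discarding the cone description entirely.
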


Combining with our Theorem \ref{moreKst.1} and Theorem \ref{moreKst.2}, we obtain the following. 

\begin{Cor}\label{counterexamples.1}

{\rm (i)}
For the following projective orbifolds $X$ which have discrete automorphism groups, 
$(X, K_{X})$ are K-stable 
but asymtotically Chow unstable. 
Furthermore,  they all have 
K\"{a}hler-Einstein $($orbifold$)$ metrics. 

{\rm (i-a)}
Finite quotients  of the selfproduct of Hurwitz curve $C$ 
$($e.g.\ , Klein curve $(x^{3}y+y^{3}z+z^{3}x=0) \subset \mathbb{P}^{2}$ with genus $3$ $)$ $X=(C\times C)/\Delta(\Aut(C))$. 
Here, $\Delta(\Aut(C))$ is the diagonal subgroup of $\Aut(C) \times \Aut(C)$. 

Here, a ``Hurwitz curve" means a smooth projective curve with $\#\Aut(C)=84(g-1)$, 
which is the maximum possible for the fixed genus $g(\ge 2)$ $($cf.\ \cite[section 6.10]{Iit82}$)$. 

{\rm (i-b)}
A quasi-smooth weighted projective hypersurface of the following type $;$ 

\begin{equation*}
(y^{p}x_{0}=\sum_{i=0}^{n}x_{i}^{c_{i}})\subset \mathbb{P}(a_{0},\cdots, a_{n},b), 
\end{equation*}

where $a_{i}c_{i}=pb+a_{0}$ and $p, c_{i} \gg 0$. 
It has $\frac{1}{b}(a_{1},\cdots ,a_{n})$-type cyclic quotient singularity, 
which has multiplicity bigger than $(n+2)!$, 
and the canonical divisor $K_{X}$ is ample $\mathbb{Q}$-Cartier divisor. 

{\rm (i-c)}
Let $l_{i}$ $($$i=1, \cdots, n$, where $n \geq 9$$)$ 
be general $n$ lines in projective plane $\mathbb{P}^{2}$. 
After the blowing up $\pi \colon B \rightarrow \mathbb{P}^{2}$ of $\cup (l_{i} \cap 
l_{j})$, let us blow down $\cup (\pi^{-1}_{*} l_{i})$ to obtain $X$. $X$ has cyclic quotient singularities with multiplicity $n-2$. $X$ is smoothable but not 
$\mathbb{Q}$-Gorenstein smoothable $($cf.\ \cite[section 2]{LP07}$)$. See also \cite{Kol08} and \cite{HK10} for similar  examples. 

{\rm (ii)}

For the following log Enriques surfaces $($cf.\ \cite{Zha91}, \cite{OZ00}$)$, 
for any polarization $L$, the polarized variety $(X,L)$ \textcolor{black}{are} K-stable but asymptotically Chow unstable. Furthermore, $X$ have Ricci-flat 
$($orbifold$)$ K\"{a}hler metrics with K\"{a}hler class $c_{1}(L)$. 

{\rm (ii-a)}
$X$=$Y/ \langle \sigma \rangle$, where $(Y,\sigma)$ is a K3 surface $Y$ with a non-symplectic automorphism $\sigma$ of finite order, 
in the list of \cite[Table$6$ l$1$ or Table$7$ l$1$]{AST09}. They have quotient singularity with multiplicity $17$ and $7$ 
respectively. 

{\rm (ii-b)}
$X$=$Z/ \langle \sigma \rangle$, where $Z$ is the birational crepant contraction of K3 surface $Y$ along a $(-2)$ curve $D$ on it, 
where $\sigma$ is a non-symplectic 
automorphism of finite order which fixes $D$, in the list of 
\cite[Table$3$ l$1$, Table$5$ l$1$]{AST09}. They have a quotient singularity with multiplicity $7$. 

\end{Cor}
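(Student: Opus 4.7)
The overall plan is, for each example, to verify (a) K-stability via Theorem \ref{moreKst.1} or \ref{moreKst.2}, (b) the existence of a closed point whose multiplicity exceeds $(\dim X + 1)!$ so as to invoke Proposition \ref{locst} and conclude asymptotic Chow instability, (c) discreteness of $\Aut(X)$, and (d) existence of the K\"ahler-Einstein or Ricci-flat K\"ahler orbifold metric. K-stability in cases (i-a)--(i-c) follows from Theorem \ref{moreKst.1}, since each listed variety is canonically polarized and has at worst cyclic quotient singularities (hence log-terminal, a fortiori semi-log-canonical). K-stability in cases (ii-a)--(ii-b) follows from Theorem \ref{moreKst.2}, since each log Enriques surface is log-terminal with $K_X$ numerically trivial. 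Discreteness of $\Aut(X)$ is automatic in (i) because $K_X$ is ample (\cite[Theorem 10.11, Theorem 11.12]{Iit82}), and in (ii) it follows from Corollary \ref{aut:fin.CY}.

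For asymptotic Chow instability I must exhibit in each case a closed point with multiplicity larger than $(\dim X + 1)!$. In (i-a), at a point $p \in C$ whose stabilizer in $\Aut(C)$ is cyclic of order $d$, the local analytic model of $X = (C\times C)/\Delta(\Aut(C))$ near the image of $(p,p)$ is the cyclic quotient $T_p C \oplus T_p C$ by $\mu_d$ acting diagonally, i.e.\ the $\tfrac{1}{d}(1,1)$-singularity, which is the cone over the rational normal curve of degree $d$ and hence has multiplicity $d$ at the vertex. Any Hurwitz curve has a fixed point of stabilizer order $7$ (coming from the $(2,3,7)$-signature of $\Aut(C)\curvearrowright C \to C/\Aut(C) \cong \mathbb{P}^1$), so the multiplicity $7$ exceeds $3!=6$. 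In (i-b), the multiplicity of $\tfrac{1}{b}(a_1,\ldots,a_n)$ is controlled by $b$, which becomes arbitrarily large as $p,c_i \to \infty$ subject to $a_i c_i = pb + a_0$; we can thus arrange $b > (n+2)!$. In (i-c), a direct local computation at each contracted proper transform $\pi^{-1}_* l_i$ (self-intersection $2-n$) identifies the image as a cyclic quotient of multiplicity $n-2 \geq 7 > 6$ for $n \geq 9$ (cf.\ \cite{LP07}). Cases (ii-a) and (ii-b) are tabulated in \cite{AST09}: the quotient singularities have multiplicities $17$ or $7$, both exceeding $3!=6$. In every case, Proposition \ref{locst} rules out asymptotic Chow semistability.

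The orbifold K\"ahler-Einstein or Ricci-flat K\"ahler metrics are standard consequences of Aubin--Yau in the orbifold category: in (i-a), the product of hyperbolic metrics on $C$ is $\Aut(C)$-invariant and descends to a K\"ahler-Einstein orbifold metric on $X$; in (i-b) and (i-c), since $X$ is a canonically polarized orbifold with only quotient singularities, the orbifold Aubin--Yau theorem yields the metric directly; in (ii), the canonical orbifold K3 cover of each log Enriques surface carries a Ricci-flat K\"ahler metric by Yau, which descends to any K\"ahler class on $X$. The main obstacle I expect is the multiplicity computation in (i-a): one must identify the $\Delta(\Aut(C))$-action on the tangent space at a fixed point as the diagonal cyclic action, verify that the resulting quotient singularity is the $\tfrac{1}{d}(1,1)$-type cone over a rational normal curve, and read off its multiplicity as $d$. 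The other multiplicity calculations reduce to standard local descriptions of weighted projective or cyclic-quotient singularities.
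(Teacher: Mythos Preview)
Your proposal is correct and follows essentially the same approach as the paper: K-stability via Theorems \ref{moreKst.1} and \ref{moreKst.2}, asymptotic Chow instability via Proposition \ref{locst}, and K\"ahler--Einstein metrics via the orbifold Aubin--Yau theorem (or by descent from smooth covers). You supply considerably more detail than the paper's own proof---in particular the explicit $\tfrac{1}{7}(1,1)$ multiplicity computation for (i-a) via the $(2,3,7)$ ramification signature, and the verification of discreteness of $\Aut(X)$---whereas the paper treats the stated multiplicities as part of the corollary's hypotheses and dispatches the proof in a few lines.
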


\begin{proof}[Proof of Corollary \ref{counterexamples.1}]
These examples are asymptotically unstable by Proposition \ref{locst} and they have  K\"{a}hler-Einstein orbifold metrics 
by Yau \cite{Yau78} whose proof also works in the category of orbifolds. 
Alternatively, those examples that are (globally) finite quotients of smooth  projective varieties so we can also directly construct the metrics by descending from the covers. This is possible since the K\"{a}hler-Einstein metrics are unique up to $\Aut^{\circ}(X)$\textcolor{black}{, the connected component of $\Aut(X)$,} by 
Bando-Mabuchi \cite{BM87}. 
We proved the K-stability of examples {\rm (i)} in Theorem \ref{moreKst.1} and that of 
examples {\rm (ii)} in Theorem \ref{moreKst.2}. 
\end{proof}

\begin{Rem}
We are more examples of type ({\rm i}), of which we will omit the detail. 
They are $X$'s in \cite{LP07}, \cite{PPS09a}, \cite{PPS09b}. Consult those papers for the detail. They are ``$\mathbb{Q}$-Gorenstein\textcolor{black}{-}smoothable" rational projective surfaces and have ample $\mathbb{Q}$-Cartier canonical divisor $K_{X}$. 
For the concept of ``$\mathbb{Q}$-Gorenstein-smoothing", 
we refer to e.\ g.\ \cite[section $2$]{LP07} as well. 
They have quotient singularities with multiplicity larger than $6$. 
Consult also Rasdeaconu-Suvaina \cite{RS08} especially for the proof of ampleness of $K_{X}$ by explicit calculation of intersection numbers. 

The examples in (ii) are ``log Enriques surface"s, which are introduced by D.~Q.~Zhang in \cite{Zha91}. Original motivation of \cite{LP07}, \cite{PPS09a}, \cite{PPS09b} are 
to construct their smoothed deformation which are simply connected and $p_{g}=0$. 
\end{Rem}






\begin{fund}

This work was supported by the Grant-in-Aid for Scientific Research (KAKENHI No.\ 21-3748) and the Grant-in-Aid for JSPS fellows. 

\end{fund}

\begin{ack}
Firstly, the author would like to express his deep gratitudes to his advisor  Professor Shigefumi Mori for heartful encouragements, 
mathematical suggestions and reading the draft. 
I am also grateful to Doctor Shingo Taki, 
Professor Yongnam Lee and Professor Julius Ross for their helps and discussions. 
\end{ack}

\end{document}